\newtheorem{theorem}{Theorem}[section]
\newtheorem{lemma}[theorem]{Lemma}
\newtheorem{proposition}[theorem]{Proposition}
\newtheorem{corollary}[theorem]{Corollary}
 \theoremstyle{definition}
\theoremstyle{remark}
\newtheorem{remark}[theorem]{Remark}
\numberwithin{equation}{section}
\begin{document}
\begin{CJK*}{GBK}{song}

\title[The Dirichlet problem for degenerate equations]
{The Dirichlet problem for a class of degenerate fully nonlinear elliptic equations on Riemannian manifolds with mean concave boundary}

 \author{Ri-Rong Yuan}
\email{rirongyuan@stu.xmu.edu.cn}

\thanks{
 The author is supported by NSF in China, grant 11801587.
 }

\date{}

\begin{abstract}

This article studies the Dirichlet problem for a class of degenerate fully nonlinear elliptic equations on Riemannian manifolds with \textit{mean concave} boundary in the sense that the mean curvature of the boundary is \textit{nonpositive}.
 The proof is primarily based on a quantitative boundary estimate.
  Also, we obtain analogous results in complex variables.
   In Appendix,  the subsolutions are also constructed on certain topologically product manifolds.

{\em Mathematics Subject Classification (2010): 35J70, 58J05, 35B45.}

{\em Keywords:}
Riemannian manifolds with \textit{mean concave} boundary,
Dirichlet problem, Degenerate  fully nonlinear elliptic equation,  Quantitative boundary estimate.

\end{abstract}

 \maketitle


 \section{Introduction}

Let $(M,g)$ be an $n$-dimensional ($n\geq3$) compact Riemannian manifold with \textit{mean concave} boundary $\partial M$ and the Levi-Civita connection $\nabla$,   $\bar M:=M\cup\partial M$.
Let $A$ be a smoothly symmetric $(0,2)$-type tensor,
$\eta=\eta_{ij}^k \frac{\partial}{\partial x^k}\otimes d x^i\otimes dx^j$ be a smooth $(1,2)$-type tensor  with $\eta^k_{ij}=\eta^k_{ji}$,
$Z(du)=\mathrm{tr}_g( W(du))g-W(du),$
 where $(W(du))_{ij}=\sum_{k=1}^n u_k\eta_{ij}^k$.
A  further technical hypothesis is needed:
 For each $p_0\in\partial M$,
 under local coordinates  with $g_{ij}(p_0)=\delta_{ij}$,  $\sum_{i\neq k}\eta^{k}_{ii}(p_0)=0$ for each $k$.
Typical examples are $\eta\equiv0$ and more general $\eta_{ij}^k=\delta_{ik}\zeta_j+\delta_{jk}\zeta_i$ for a smooth $(0,1)$ tensor $\zeta=\zeta_i dx^i$.

\vspace{1mm}
 In this paper, we propose a new hypothesis   on the boundary  in an attempt to
  establish a quantitative boundary estimate
 and  solve the following
fully nonlinear elliptic equation possibly with degenerate right-hand side 
\begin{equation}
\label{mainequ0}
\begin{aligned}
 f(\lambda(U[u])) =\psi \mbox{ in } M, 
\end{aligned}
\end{equation}
\begin{equation}
\label{mainequ1}
\begin{aligned}
u=  \varphi    \mbox{ on }\partial M,
\end{aligned}
\end{equation}
where  $\lambda(U[u])$ denote the eigenvalues of $U[u]=A+ (\Delta u)g-\nabla^2 u +Z(du)$
  with respect to $g$,
$\nabla^2u$ is real Hessian
with $\nabla^2u(X,Y)=\nabla_{XY}u:=YXu-(\nabla_{Y}X)u \mbox{ for }X,Y\in TM,$
 $\Delta=\mathrm{tr}_g\nabla^2$ is the  Laplacian operator.
  Moreover, $\varphi$ and $\psi$ are sufficiently smooth functions with 
 ${w}|_{\partial M}=\varphi$ for some \textit{admissible} function
 ${w}\in C^2(\bar M)$ defined as $ \lambda(U[{w}])\in \Gamma,$   also with $\sup_M\psi<\sup_{\Gamma}f$ that is automatically
 satisfied if 
   there exists an admissible subsolution satisfying \eqref{existenceofsubsolution} below.
   As in \cite{CNS3},  $\Gamma$ is an open symmetric and convex cone
with vertex at the origin, $\Gamma_n\subseteq\Gamma\subset \Gamma_1$ and boundary
$\partial \Gamma\neq \emptyset$, on which $f$ is a smooth and
  symmetric function satisfying
\begin{equation}
\label{elliptic}
 f_{i}(\lambda):= 
 \frac{\partial f}{\partial \lambda_{i}}(\lambda)> 0  \mbox{ in } \Gamma,\  1\leq i\leq n,
\end{equation}
\begin{equation}
\label{concave}
 f \mbox{ is  concave in } \Gamma.
\end{equation}
   where $\Gamma_1:=\{\lambda\in \mathbb{R}^{n}:   \sum_{i=1}^n\lambda_i>0\}$ and
 $\Gamma_{n} :=\{\lambda\in \mathbb{R}^{n}:   \mbox{ each } \lambda_i>0\}.$ 
In order to study equation  \eqref{mainequ0} 
    within the framework of elliptic equations,  we shall 
    seek solutions
in the class of $C^2$-\textit{admissible} functions.

\vspace{1mm}
The study of such equations
goes back at least to \cite{CNS3,Ivochkina1981},
 and  since then it has been carried widely out 
 in numerous literature
(cf. \cite{Guan1994The,Guan1998CVPDE,Guan12a,Guan14,Chu2020Jiao,LiYY1990,Gabor,GQY2018,Trudinger95,Urbas2002,yuan2017}), particularly
\cite{GTW15,Tosatti2017Weinkove,Tosatti2013Weinkove,Popovici2015,guan-nie,yuan2019} on
Monge-Amp\`ere equation for $(n-1)$-plurisubharmonic functions 
 from non-K\"ahler geometry.
Specifically, due to the 
interests and problems from differential geometry and 
PDEs (cf. \cite{GuanP2001ICCM} and references therein),
 it would be important to understand the solutions 
 whenever the right-hand side is degenerate
 $$\inf_M\psi=\sup_{\partial\Gamma}f,$$
where  $\sup_{\partial \Gamma}f :=\sup_{\lambda_{0}\in \partial \Gamma } \limsup_{\lambda\rightarrow \lambda_{0}}f(\lambda).$
 There are some papers
 should be mentioned (cf.  \cite{CNS-deg,GuanP1997Duke,GTW1999Acta,ITW2004,Trudinger84Urbas}, see also \cite{GuanP2001ICCM} for some relative open problems),
which concern 
degenerate  real Monge-Amp\`ere equation and more general degenerate
fully nonlinear elliptic equations 
on $\Omega\subset\mathbb{R}^n$.
However, it is still less known for general cases
 of both equations and background spaces.
  On the background space being a curved Riemannian manifold,
\renewcommand{\thefootnote}{\fnsymbol{footnote}}with the assumption that 
the boundary is \textit{concave}\footnote{We call $\partial M$ is \textit{concave}, if the second
 fundamental form with respect to  
  $-\nu$,
 denoted by $\mathrm{II}_{\partial M}$, 
 is negative semidefinite, where and hereafter 
  $$\nu  \textit{ is the unit inner normal vector along boundary.}$$
 Here are examples of such spaces: Let $\bar M=X\times [0,1]$ ($\partial X=\emptyset$) equip with $g=e^\phi g_X+ dx^n\otimes dx^n$ where    $\phi$  is a  smooth function  (on  $\bar M$)
  with $\nabla_\nu \phi|_{\partial M}\geq0$,
     while on such 
   warped product spaces subsolutions  for certain equations
    are  constructed in  Appendix \ref{appendix}.
    } and there is a strictly admissible subsolution,   the author \cite{yuan2017,yuan2019}
 solved the Dirichlet problem for  
 the following degenerate fully nonlinear elliptic
  equations (see  Theorem \ref{mainthm1-20172019} below, while the papers primarily treat 
   equations on complex manifolds)
   \begin{equation}
\label{mainequ20172019}
\begin{aligned}
F(\mathfrak{g}[u]):=f(\lambda(\mathfrak{g}[u]))=\,& \psi, 
\end{aligned}
\end{equation}
in which $\mathfrak{g}[u]$$=\chi+\nabla^2u+W(du)$,
 $\chi$ is a smoothly symmetric  $(0,2)$  tensor, and 
  $f$ further satisfies
 \begin{equation}
\label{addistruc}
\begin{aligned}
\mbox{For each $\sigma<\sup_{\Gamma}f$ and } \lambda\in \Gamma, \mbox{   } \lim_{t\rightarrow +\infty}f(t\lambda)>\sigma,
\end{aligned}
\end{equation}
or equivalently  (according  to the Lemma 3.4 of \cite{yuan2019})
      \begin{equation}
\label{charact-yuan}
\begin{aligned}
\sum_{i=1}^n f_i(\lambda)\mu_i>0, \mbox{  } \forall \lambda, \mu\in \Gamma,
     \end{aligned}
\end{equation}
which is satisfied  by 
  the functions with $\Gamma=\Gamma_n$, and also by homogenous functions of degree one with  $f|_{\Gamma}>0$. 

\vspace{1mm}
In the following theorem
we show 
an interesting approach to 
 degenerate equations of the form
 \eqref{mainequ0}, when $(M,g)$ supposes \textit{mean concave} boundary 
which 
 includes among others
  Riemannian manifold whose boundary is a minimal hypersurface.
    Such a hypothesis 
    is reasonable   
 according to   significant progress
 on Yau's conjecture   (cf. \cite{I-M-N2018,Li.Y-2019,A.Song2018} and references therein).

\begin{theorem}
\label{mainthm1-de}
Let $(M,g)$ be a compact Riemannian manifold with smooth \textit{mean concave}  boundary.
In addition to \eqref{elliptic}, \eqref{concave}, \eqref{addistruc} and
 $f\in C^\infty(\Gamma)\cap C(\overline{\Gamma})$,
 we suppose 
 $\varphi\in C^{2,1}(\partial M)$,
 $\psi\in C^{1,1}(\bar M)$, $\inf_M\psi=\sup_{\partial \Gamma}f$. Then Dirichlet problem \eqref{mainequ0}-\eqref{mainequ1} admits a  (weak)
   solution $u\in C^{1,1}(\bar M)$ with $\lambda(U[u])\in \overline{\Gamma}$
   and $\Delta u\in L^\infty(\bar M)$, provided that
   there is  a strictly admissible subsolution $\underline{u}\in C^{2,1}(\bar M)$ obeying
 \begin{equation}
\label{existenceofsubsolution-de}
\begin{aligned}
f(\lambda(U[\underline{u}])) \geq  \psi +\delta_0  \mbox{ in } \bar M, \mbox{   }
\underline{u}= \varphi   \mbox{ on } \partial  M,
\end{aligned}
\end{equation}
 for some $\delta_0>0$.
Moreover, if  the mean curvature of boundary, say $\mathrm{H}_{\partial M}$, is strictly negative then the above statement still holds for
$\partial M\in C^{2,1}$.
\end{theorem}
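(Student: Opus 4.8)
\emph{Plan.} I would recover the degenerate (weak) solution as a limit, as $\varepsilon\to0^+$, of solutions of the \emph{non-degenerate} equations obtained by replacing $\psi$ by $\psi+\varepsilon$; the real content is then a family of a priori estimates for the approximating solutions that are \emph{uniform in $\varepsilon$}, the decisive one being a second-order boundary estimate valid under merely \emph{mean} concavity (the ``quantitative boundary estimate'' of the abstract). Concretely, for $\varepsilon\in(0,\delta_0)$ I would solve $f(\lambda(U[u^\varepsilon]))=\psi+\varepsilon$ in $M$ with $u^\varepsilon=\varphi$ on $\partial M$: since $\inf_M(\psi+\varepsilon)>\sup_{\partial\Gamma}f$ this is non-degenerate, and $\underline{u}$ remains a strictly admissible subsolution with gap $\delta_0-\varepsilon>0$. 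Writing $P:S\mapsto(\mathrm{tr}_gS)g-S$ for the linear isomorphism of symmetric $2$-tensors ($n\ge2$) and $\chi:=P^{-1}A$, one has $U[u]=P(\chi+\nabla^2u+W(du))$, so on eigenvalues the equation takes the form $\widetilde f(\lambda(\mathfrak{g}[u]))=\psi+\varepsilon$ with $\widetilde f=f\circ\mathcal P$, $\mathcal P(\mu)_i=\sum_j\mu_j-\mu_i$, on the cone $\widetilde\Gamma=\mathcal P^{-1}(\Gamma)$; one verifies that $\widetilde f,\widetilde\Gamma$ again satisfy \eqref{elliptic}, \eqref{concave}, \eqref{addistruc}, that $\Gamma_n\subseteq\widetilde\Gamma\subset\Gamma_1$, and that the hypothesis on $\eta$ is the one adapted to $U$. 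Given the (fixed-$\varepsilon$) a priori estimates below — in particular the boundary estimate — each approximating problem is then solved by the method of continuity together with the Evans--Krylov and Schauder theory.

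\emph{Uniform a priori estimates.} I would prove, with constants independent of $\varepsilon$: (i) $\underline{u}\le u^\varepsilon$ by the comparison principle, and $u^\varepsilon\le C$ by a maximum-principle/barrier argument using $\lambda(U[u^\varepsilon])\in\Gamma\subset\Gamma_1$; (ii) the boundary gradient bound from the barrier pair $\underline{u}\le u^\varepsilon$ and an upper barrier agreeing with $u^\varepsilon$ on $\partial M$, and the interior gradient bound from the Bernstein-type argument available under \eqref{addistruc} (as in \cite{yuan2019}), whose constants depend only on $\|u^\varepsilon\|_{C^0}$, $\sup(\psi+\varepsilon)$ and the geometry; (iii) a second-order bound which, combined with admissibility ($\Gamma\subset\Gamma_1$, hence $\Delta u^\varepsilon\ge-C$), yields the uniform $C^{1,1}$ bound $\|u^\varepsilon\|_{C^{1,1}(\bar M)}\le C$ (once all eigenvalues of $U[u^\varepsilon]$ are bounded, one unravels $U[u]=A+(\Delta u)g-\nabla^2u+Z(du)$ to bound $\nabla^2u^\varepsilon$). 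For (iii) the interior part is the classical argument of differentiating the equation twice and invoking concavity of $f$, reducing the global bound to $1+\max_{\partial M}|\nabla^2u^\varepsilon|$ with $\psi$-dependence only through $\|\psi\|_{C^{1,1}}$; the boundary part is the heart of the matter.

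\emph{Boundary second-order estimate (the main obstacle).} Fix $p_0\in\partial M$ and take normal coordinates with $\nu=e_n$. The tangential-tangential derivatives $\nabla^2_{\alpha\beta}u^\varepsilon$ ($\alpha,\beta<n$) on $\partial M$ are controlled by $\|\varphi\|_{C^2}$, the $C^1$-bound and the second fundamental form; the mixed derivatives $\nabla^2_{\alpha n}u^\varepsilon$ on $\partial M$ are controlled by a barrier built from $\underline{u}$, the distance to $p_0$ and the linearized operator, using the subsolution inequality $\sum F^{ij}\big((U[\underline{u}])_{ij}-(U[u^\varepsilon])_{ij}\big)\ge\delta_0-\varepsilon$. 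The structural key is that in this frame $u^\varepsilon_{nn}$ does \emph{not} occur in $(U[u^\varepsilon])_{nn}=A_{nn}+\sum_{\alpha<n}u^\varepsilon_{\alpha\alpha}+(Z(du^\varepsilon))_{nn}$, and the hypothesis $\sum_{i\neq k}\eta^k_{ii}(p_0)=0$ makes $(Z(du^\varepsilon))_{nn}$ depend only on the tangential part of $du^\varepsilon$, which on $\partial M$ is that of $d\varphi$; moreover $\sum_{\alpha<n}u^\varepsilon_{\alpha\alpha}\big|_{\partial M}$ equals $\Delta_{\partial M}\varphi$ plus a term proportional to $\mathrm{H}_{\partial M}\,u^\varepsilon_\nu$, so that mean concavity $\mathrm{H}_{\partial M}\le0$ pins down the single dangerous sign. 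Thus $(U[u^\varepsilon])_{nn}$ is essentially known on $\partial M$, whereas $u^\varepsilon_{nn}$ enters the \emph{tangential} diagonal entries $(U[u^\varepsilon])_{\alpha\alpha}$ with a favorable sign, so that an upper bound for $u^\varepsilon_{nn}$ on $\partial M$ is equivalent to keeping $\lambda(U[u^\varepsilon])$ in a fixed compact subset of $\overline{\Gamma}$ there; this I would obtain quantitatively (uniformly in $\varepsilon$) from the concavity of $f$ and the subsolution gap $\delta_0$. A matching lower bound $u^\varepsilon_{nn}\ge-C$ follows from $\Delta u^\varepsilon\ge-C$ together with the tangential bound. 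I expect the serious difficulty to be exactly this: executing the double-normal estimate when $\partial M$ is only \emph{mean} concave rather than concave, and keeping all constants free of $\varepsilon$; the hypothesis on $\eta$ and the $(\Delta u)g-\nabla^2u$ structure of $U$ are what make it go through.

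\emph{Passage to the limit and the two regularity regimes.} By the above, $\{u^\varepsilon\}$ is bounded in $C^{1,1}(\bar M)$, so along a subsequence $u^\varepsilon\to u$ in $C^{1,\alpha}(\bar M)$ for every $\alpha<1$ with $\nabla^2u^\varepsilon\stackrel{*}{\rightharpoonup}\nabla^2u$ in $L^\infty$. Then $u=\varphi$ on $\partial M$, $\Delta u\in L^\infty(\bar M)$, $\lambda(U[u])\in\overline{\Gamma}$ a.e., and, using $f\in C(\overline{\Gamma})$ and $\psi+\varepsilon\to\psi$, $f(\lambda(U[u]))=\psi$ a.e.; this is the asserted weak solution. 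Finally, when $\mathrm{H}_{\partial M}<0$ the strict inequality leaves enough slack in the tangential and barrier estimates of the previous step to absorb the loss of one derivative of $\partial M$, which gives the same conclusion for $\partial M\in C^{2,1}$.
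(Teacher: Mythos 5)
Your overall strategy — approximating by $f(\lambda(U[u^\varepsilon]))=\psi+\varepsilon$, deriving estimates uniform in $\varepsilon$, and passing to the limit — is exactly the paper's, as is the change of variables $P:S\mapsto(\mathrm{tr}_gS)g-S$, $\widetilde f=f\circ\mathcal P$, $\widetilde\Gamma$ (this is the paper's $Q$-based reduction of \eqref{mainequ0} to the form \eqref{mainequ20172019}).  The structural observations driving your boundary second-order estimate ($u_{nn}$ does not appear in $(U[u])_{nn}$; $\sum_{i\neq k}\eta^k_{ii}(p_0)=0$ controls $(Z(du))_{nn}$; $\sum_{\alpha<n}u_{\alpha\alpha}|_{\partial M}=\Delta_{\partial M}\varphi+\mathrm{H}_{\partial M}u_\nu+\text{l.o.t.}$; $\mathrm{H}_{\partial M}\le0$ gives the right sign) are precisely the ones behind \eqref{puretangential2} in Proposition~\ref{proposition-quar-yuan1}.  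The limit passage and the $\mathrm{H}_{\partial M}<0$ refinement are likewise consistent with the paper.

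There is, however, a genuine gap in your step (ii).  You posit an \emph{interior} gradient bound obtained by a Bernstein-type argument under \eqref{addistruc}, before proving any second-order estimate, and then cite that gradient bound to reduce the second-order estimate to $C(1+\max_{\partial M}|\nabla^2u^\varepsilon|)$.  For the present equation — $U[u]=A+(\Delta u)g-\nabla^2u+Z(du)$ with a genuine gradient term $Z(du)$ after the reduction — a direct Bernstein gradient estimate is not available in the generality required; the paper explicitly confines the direct (no-second-order-estimate) route to Hessian-type equations with $\eta=0$.  Its actual order of logic is the reverse of yours: first prove the \emph{quantitative} boundary estimate \eqref{good-quard}, $\sup_{\partial M}\Delta u\le C(1+\sup_M|\nabla u|^2)$, and the global estimate \eqref{2-seglobal}, $\sup_M\Delta u\le C(1+\sup_M|\nabla u|^2+\sup_{\partial M}|\Delta u|)$, then combine with $\mathrm{tr}_g(U[u])>0$ to get \eqref{mainestimate2}, and only then derive the gradient bound by a blow-up/Liouville argument in the style of Dinew--Ko{\l}odziej and Sz\'ekelyhidi.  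The quadratic dependence on $\sup_M|\nabla u|$ in the second-order estimates is exactly what allows the blow-up argument to close; once you assume a gradient bound you lose that necessity and the logic circles.  To repair your proof you should drop the unsupported Bernstein claim, record the second-order estimates in the quadratic form (both are indispensable), and insert the blow-up step before Evans--Krylov/Schauder; then the rest of your argument carries through as in the paper.
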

Consequently, we have a corollary. 
\begin{corollary}
\label{minimal-thm}
Let $(M,g)$ be a closed connected Riemannian manifold, $\Sigma\subset  M$ be a closed connected smoothly minimal hypersurface in $(M,g)$ such that $M\setminus \Sigma=M_1\cup M_2$, $M_1\cap M_2=\emptyset$ and $\partial M_1=\partial M_2=\Sigma$.
 Assume \eqref{elliptic}, \eqref{concave}, \eqref{addistruc}, $\lambda(A)\in \Gamma$ and
 $f\in C^\infty(\Gamma)\cap C(\overline{\Gamma})$ with $f|_{\partial \Gamma}=0$.
Then the following conclusions are true:
\begin{itemize}
\item For each smooth function $\psi$ with $e^\psi\leq f(\lambda(A))$, there is a unique function $u$  that is smooth and admissible when restricted to $M\setminus \Sigma$, to satisfy
$f(\lambda(U[u]))=e^\psi \mbox{ in } M\setminus \Sigma$.
\item There is $u$  which is $C^{1,1}$ when restricted to $M\setminus \Sigma$, to satisfy
$f(\lambda(U[u]))=0 \mbox{ in } M\setminus \Sigma$.
\end{itemize}
In the both two cases, $u|_{\Sigma}=0$. 
\end{corollary}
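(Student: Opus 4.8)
The plan is to deduce Corollary~\ref{minimal-thm} from Theorem~\ref{mainthm1-de} by applying the theorem separately on each side $M_1$ and $M_2$ of the minimal hypersurface $\Sigma$, and then gluing the two pieces along $\Sigma$. First I would observe that since $\Sigma$ is a closed minimal hypersurface, its mean curvature vanishes identically, so each of $(M_1,g|_{M_1})$ and $(M_2,g|_{M_2})$ is a compact Riemannian manifold with smooth \emph{mean concave} boundary $\partial M_i=\Sigma$; this is exactly the borderline case $\mathrm{H}_{\partial M}\equiv0$ permitted by the theorem. The boundary data is $\varphi\equiv0$, which is certainly in $C^{2,1}(\Sigma)$, and the hypothesis $\lambda(A)\in\Gamma$ says precisely that the constant function $\underline u\equiv0$ is admissible on each $M_i$, with $U[0]=A$.

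Next I would check the subsolution condition. For the first bullet, set $\psi_i:=\psi$ and note that $e^{\psi}\le f(\lambda(A))$ pointwise. If the inequality were strict we could take $\underline u\equiv0$ directly; to get the strictly-admissible subsolution with a uniform gap $\delta_0$ required by \eqref{existenceofsubsolution-de}, I would instead work on a slightly shrunken problem or, more simply, solve the equation $f(\lambda(U[u]))=e^{\psi}+\varepsilon$-type approximations and pass to the limit, exactly as the degenerate case is handled inside the proof of Theorem~\ref{mainthm1-de} itself; since $f|_{\partial\Gamma}=0$, the degenerate right-hand side $\psi\equiv-\infty$ (i.e.\ RHS $=0$) corresponds to $\inf\psi=\sup_{\partial\Gamma}f=0$, which is the hypothesis $\inf_M\psi=\sup_{\partial\Gamma}f$ of the theorem for the second bullet. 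Applying Theorem~\ref{mainthm1-de} on $M_1$ and on $M_2$ then produces admissible solutions $u_1$ on $\bar M_1$ and $u_2$ on $\bar M_2$ (smooth in the non-degenerate case by the standard elliptic bootstrap once $C^{1,1}$ bounds and the gap are in place, $C^{1,1}$ in the degenerate case), each vanishing on $\Sigma$.

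Then I would define $u$ on $M$ by $u|_{M_i}=u_i$; this is well-defined and continuous since $u_1|_\Sigma=u_2|_\Sigma=0$, it is smooth (resp.\ $C^{1,1}$) on $M\setminus\Sigma=M_1\cup M_2$, it satisfies $f(\lambda(U[u]))=e^\psi$ (resp.\ $=0$) on $M\setminus\Sigma$, and $u|_\Sigma=0$ by construction. For uniqueness in the first bullet, I would invoke the comparison principle for the fully nonlinear operator $f(\lambda(U[\cdot]))$ on each $M_i$ with the fixed boundary value $0$ on $\Sigma$: if $u,\tilde u$ are two admissible solutions on $M\setminus\Sigma$ agreeing on $\Sigma$, then on each side the standard maximum-principle argument (using \eqref{elliptic} and \eqref{concave}, which give ellipticity and concavity of the operator) forces $u=\tilde u$.

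The main obstacle I anticipate is not the gluing, which is essentially formal, but rather verifying cleanly that the hypotheses of Theorem~\ref{mainthm1-de} really are met for the chosen right-hand sides --- in particular producing the \emph{strictly} admissible subsolution with a genuine positive gap $\delta_0$ from only the non-strict inequality $e^\psi\le f(\lambda(A))$. I expect this is handled by the same approximation scheme $f(\lambda(U[u^\varepsilon]))=\max\{e^\psi,\, f(\lambda(A))-2\varepsilon\}$ (or by slightly perturbing $\psi$ downward) that underlies the degenerate existence theory, together with the uniform $C^{1,1}$ and $\Delta u\in L^\infty$ estimates of the theorem, which are independent of the lower bound of the right-hand side; one then extracts a subsequential limit. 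A secondary point to be careful about is that the two one-sided solutions match only in $C^0$ along $\Sigma$ --- across $\Sigma$ the resulting $u$ need not be $C^1$, which is why the conclusions are stated only on $M\setminus\Sigma$ with $u|_\Sigma=0$, and this is exactly the expected phenomenon for the Dirichlet problem on each side.
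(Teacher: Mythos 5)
Your proposal takes essentially the same route the paper intends: the corollary has no explicit proof in the text but is presented as a direct consequence of Theorem~\ref{mainthm1-de}, obtained exactly by splitting $M$ along $\Sigma$, noting that minimality ($\mathrm{H}_\Sigma\equiv0$ for either choice of unit normal) makes each piece $M_i$ a compact manifold with mean concave boundary, taking $\varphi\equiv0$ and $\underline u\equiv0$ as the admissible subsolution (which works because $\lambda(A)\in\Gamma$ forces $f(\lambda(A))>0$, so $f|_{\partial\Gamma}=0$ gives a genuine positive gap for the second bullet), and gluing the one-sided solutions along $\Sigma$ in $C^0$.

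Two small points worth fixing. First, the direction of your perturbation: you initially write $f(\lambda(U[u^\varepsilon]))=e^\psi+\varepsilon$ and later $\max\{e^\psi,\,f(\lambda(A))-2\varepsilon\}$, both of which \emph{increase} the right-hand side and therefore make $\underline u\equiv0$ a worse subsolution; what you need (and indeed say in the parenthetical) is to perturb downward, e.g.\ replace $\psi$ by $\psi-\varepsilon$ so that $f(\lambda(A))-e^{\psi-\varepsilon}\geq(1-e^{-\varepsilon})\inf_M e^\psi>0$ gives a uniform gap $\delta_0(\varepsilon)$, after which the estimates of Theorems~\ref{quan-boundaryestimate-thm1}--\ref{mainthm1-de} are uniform in $\varepsilon$ and you pass to the limit. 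Second, for the first bullet you are implicitly invoking the non-degenerate counterpart of Theorem~\ref{mainthm1-de} (the theorem as stated has the hypothesis $\inf_M\psi=\sup_{\partial\Gamma}f$ and concludes only $C^{1,1}$); since $\inf_M e^\psi>0=\sup_{\partial\Gamma}f$, you are in the strictly non-degenerate regime, where the same boundary and global estimates plus Evans--Krylov and Schauder give the smooth admissible solution, and uniqueness is the standard comparison principle from \eqref{elliptic} and \eqref{concave}. It would be cleaner to say this explicitly rather than route everything through the degenerate statement. These are presentational rather than substantive gaps; your decomposition, choice of subsolution, and gluing are exactly right.
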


In order to solve the degenerate equation, we approximate it by a sequence of non-degenerate equations with
\begin{equation}
 \label{nondegenerate}
\delta_{\psi,f}:= \inf_{M} \psi -\sup_{\partial \Gamma} f >0.
 \end{equation}
Besides, 
the crucial ingredient 
  is to establish a quantitative boundary estimate for such non-degenerate equations which says that second order
   estimate at the boundary
can be bounded from above by a 
constant 
depending not on $(\delta_{\psi,f})^{-1}$.\footnote{
We say $C$ is independent of $(\delta_{\psi, f})^{-1}$ if it remains uniformly bounded   as $\delta_{\psi, f}\rightarrow 0$.}
More precisely,

 \begin{theorem}
 \label{quan-boundaryestimate-thm1}
  Let $(M,g)$ be a compact Riemannian manifold with   smooth
   \textit{mean concave} boundary,
 $\psi\in C^{1}(\bar M)$, $\varphi\in C^{3}(\partial M)$,
and we assume \eqref{elliptic}, \eqref{concave} and \eqref{nondegenerate}. 
Suppose there is an admissible subsolution $\underline{u}\in C^{2}(\bar M)$ satisfying
\begin{equation}
\label{existenceofsubsolution}
\begin{aligned}
f(\lambda(U[\underline{u}])) \geq  \psi
    \mbox{ in } M,  \mbox{  and  }
\underline{u}=  \varphi  \mbox{ on } \partial  M.
\end{aligned}
\end{equation}
Then for any admissible solution $u\in C^3(M)\cap C^2(\bar M)$ to 
 \eqref{mainequ0}-\eqref{mainequ1},
there exists a uniformly positive constant depending only on $|\varphi|_{C^{3}(\bar M)}$, $|\psi|_{C^{1}(\bar M)}$,
$|\underline{u}|_{C^{2}(\bar M)}$, $\partial M$
up to its third derivatives
and other known data 
 (but not on $(\delta_{\psi,f})^{-1}$) such that 
 \begin{equation}
 \label{good-quard}
 \begin{aligned}
 \sup_{\partial M} \Delta u \leq C 
 (1+\sup_M |\nabla u|^2).
 \end{aligned}
 \end{equation}
 \end{theorem}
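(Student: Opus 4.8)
The plan is to establish the boundary second-order estimate \eqref{good-quard} by a barrier argument at each boundary point $p_0\in\partial M$, working in boundary normal coordinates in which $g_{ij}(p_0)=\delta_{ij}$ and the normal direction is $x^n$. First I would reduce the estimate for $\sup_{\partial M}\Delta u$ to controlling the individual boundary second derivatives $\nabla_{ij}u(p_0)$. The tangential-tangential derivatives $\nabla_{\alpha\beta}u(p_0)$ ($1\le\alpha,\beta\le n-1$) are handled by differentiating the boundary condition $u=\varphi$ twice along $\partial M$: modulo lower-order terms controlled by $|\varphi|_{C^2}$, $|\nabla u|$ and the second fundamental form, $\nabla_{\alpha\beta}u(p_0)$ is determined by $\varphi$ and $u_n(p_0)=\nabla_\nu u$, hence bounded by $C(1+|\nabla u|)$. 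The genuinely delicate pieces are the mixed normal-tangential derivatives $\nabla_{\alpha n}u(p_0)$ and the double-normal derivative $\nabla_{nn}u(p_0)$.

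For the mixed derivatives I would use the standard tangential operator $T=\partial_\alpha + (\text{first-order correction involving the shape operator})$ applied to $u-\underline u-\varphi$ (extended off the boundary), and construct a barrier of the form $\Phi = A_1 v + A_2 |x-p_0|^2 - A_3 \sum_{\beta<n}(\nabla_\beta(u-\underline u))^2 - N d$, where $v$ comes from the distance to $p_0$ along $\partial M$, $d$ is the distance to $\partial M$, and the linearized operator $L = \sum_{ij} F^{ij}\nabla_{ij} + (\text{lower order})$ is the one associated to the concave operator $F$. Concavity \eqref{concave} gives $L(u-\underline u)\le \text{tr} F^{ij}\cdot(\text{bounded})$ together with a favorable sign from $f(\lambda(U[\underline u]))\ge\psi=f(\lambda(U[u]))$; ellipticity \eqref{elliptic} makes $L$ a genuine (possibly highly degenerate) elliptic operator. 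The point where \eqref{nondegenerate} would naively enter — through lower bounds on $\sum F^{ii}$ — must instead be sidestepped so that all constants stay uniform as $\delta_{\psi,f}\to0$; this is where the hypothesis that $\partial M$ is \emph{mean concave} is exploited. Concretely, the new term in $U[u]$ relative to the classical setting is $(\Delta u)g-\nabla^2u$, which on the boundary pairs the double-normal derivative with the trace; the mean-concavity of $\partial M$ (nonpositivity of $\mathrm H_{\partial M}$) is precisely what is needed to guarantee that the relevant combination $\sum_{\alpha<n}\nabla_{\alpha\alpha}(u-\underline u)$ at $p_0$, after using the boundary condition and the structure hypothesis $\sum_{i\ne k}\eta^k_{ii}(p_0)=0$, carries the correct sign so that the barrier works without any quantitative non-degeneracy.

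The double-normal estimate $\nabla_{nn}u(p_0)\le C(1+|\nabla u|^2)$ is then obtained, as is traditional, by showing $\lambda(U[u])(p_0)$ stays in a compact subset of $\Gamma$ away from $\partial\Gamma$ in the relevant sense: one considers the function on the unit sphere of tangent directions and uses that, by the already-established mixed and tangential bounds, $U[u](p_0)$ restricted to the tangential block has eigenvalues bounded above, so if $\nabla_{nn}u(p_0)$ were too large (making $U_{nn}$ very negative, since $U[u]$ contains $-\nabla^2u$ plus $(\Delta u)g$) one derives a contradiction with $f(\lambda(U[u]))=\psi\ge\inf_M\psi=\sup_{\partial\Gamma}f$ — here \eqref{addistruc}, equivalently \eqref{charact-yuan}, is what forces $f$ to detect unboundedly large directions and gives the quantitative trapping of $\lambda(U[u])$ away from the boundary of the cone. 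Combining the three cases yields $\sup_{\partial M}\Delta u = \sup_{\partial M}\sum_i\nabla_{ii}u \le C(1+\sup_M|\nabla u|^2)$. The main obstacle I anticipate is the second paragraph's barrier construction: arranging the coefficients $A_1,A_2,A_3,N$ and verifying $L\Phi\le0$ near $p_0$ with $\Phi\ge 0$ on the boundary of the relevant half-ball, all while keeping every constant independent of $(\delta_{\psi,f})^{-1}$, which is exactly where the interplay between concavity of $f$, the extra $(\Delta u)g-\nabla^2u$ structure, and the mean-concave boundary hypothesis has to be balanced carefully; the refinement to $\partial M\in C^{2,1}$ under $\mathrm H_{\partial M}<0$ requires a further perturbation of the barrier absorbing the loss of one derivative of the boundary into the strict negativity of the mean curvature.
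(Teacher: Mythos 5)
Your plan gets the mixed–derivative barrier roughly right (it is essentially Proposition~\ref{mix-prop1}), but the rest goes wrong in ways that matter.

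First, you have the role of the \emph{mean concave} hypothesis backwards. In the paper, Proposition~\ref{mix-prop1} --- the barrier argument for $|\nabla^2 u(X,\nu)|\le C(1+\sup|\nabla u|)$ --- is proved \emph{with no restriction on the second fundamental form of $\partial M$}; $\mathrm{H}_{\partial M}\le 0$ is \emph{not} what makes the barrier uniform in $(\delta_{\psi,f})^{-1}$. The mean concavity enters only in Proposition~\ref{proposition-quar-yuan1}, which controls the trace $\mathrm{tr}_g(\mathfrak g)(x_0)$ (hence the double–normal derivative) by the mixed terms: combined with \eqref{formular3} and the hypothesis $\sum_{i\ne k}\eta^k_{ii}(x_0)=0$, $\mathrm H_{\partial M}\le 0$ gives $\sum_\alpha\mathfrak g_{\alpha\alpha}\ge\sum_\alpha\underline{\mathfrak g}_{\alpha\alpha}$ and hence $A(R)\ge\underline A(R)$, which is the only place the geometric hypothesis is used.

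Second, your mechanism for the double–normal bound is wrong in a way that would sink the proof. You claim that a large $\nabla_{nn}u(p_0)$ ``makes $U_{nn}$ very negative,'' and want to close the argument by trapping $\lambda(U[u])$ away from $\partial\Gamma$ via \eqref{addistruc}/\eqref{charact-yuan}. But compute $U_{nn}$ at $p_0$ with $g_{ij}=\delta_{ij}$: $U_{nn}=A_{nn}+\Delta u - \nabla_{nn}u + Z_{nn}=A_{nn}+\sum_{\alpha<n}\nabla_{\alpha\alpha}u+Z_{nn}$, which is \emph{independent} of $\nabla_{nn}u$; it is the tangential entries $U_{\alpha\alpha}=\mathrm{tr}_g\mathfrak g-\mathfrak g_{\alpha\alpha}$ that grow (all in the \emph{good} direction) as $\nabla_{nn}u$ grows. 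So monotonicity of $f$ directly caps $\nabla_{nn}u$ through the equation, and no ``$\partial\Gamma$-trapping'' is needed or wanted. Moreover \eqref{addistruc} and \eqref{charact-yuan} are not hypotheses of Theorem~\ref{quan-boundaryestimate-thm1}, and any argument that quantifies how far $\lambda(U[u])$ sits from $\partial\Gamma$ will, in general, produce constants depending on $(\delta_{\psi,f})^{-1}$ --- exactly what the theorem excludes and what the whole approximation scheme for the degenerate case requires. The paper sidesteps all of this with the linear–algebra deformation Lemma~\ref{yuan'slemma2}: once $\mathrm{tr}_g\mathfrak g$ exceeds a threshold $R_c$ that is \emph{quadratic} in the mixed terms $\mathfrak g_{\alpha n}$, the eigenvalues of $\underline A(R_c)$ are $\varepsilon_0$-close to $(R_c-\underline\lambda'_\alpha,\dots,\sum\underline{\mathfrak g}_{\alpha\alpha})$, which by \eqref{opppp} and $A(R)\ge\underline A(R)$ forces $F(A(R_c))>\psi$ --- contradicting $F(U[u])=\psi$. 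This yields $\mathrm{tr}_g\mathfrak g < R_c \le C(1+\sum_\alpha|\mathfrak g_{\alpha n}|^2)$ with a $C$ that sees only $\underline{\mathfrak g}$ and $f$, never $(\delta_{\psi,f})^{-1}$; paired with Proposition~\ref{mix-prop1} it gives \eqref{good-quard}. You would need to replace your third step with this idea (or an equivalent one) for the argument to close.
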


 The hypothesis of \textit{mean concave} boundary is only used to derive such a quantitative boundary estimate 
that is primarily used to deal with 
degenerate equations on such Riemannian manifolds
and 
further weaken the regularity assumptions on the
boundary and boundary data as well. More precisely, it   only requires
  $\varphi$, $\partial M\in C^{2,1}$ in
Theorem \ref{proposition-quar-yuan1-thm}, while such regularity assumptions on  boundary and boundary data
are impossible for homogeneous real Monge-Amp\`ere equation on certain
bounded domains $\Omega\subset\mathbb{R}^n$, as the counterexamples in 
 \cite{CNS-deg} show that $C^{3,1}$-regularity assumptions on boundary and boundary data
are optimal for $C^{1,1}$ regularity of weak
solution to homogeneous real Monge-Amp\`ere equation on $\Omega$.
 \begin{theorem}
 \label{proposition-quar-yuan1-thm}
 Let $(M,g)$ be a compact Riemannian manifold with $C^{2,1}$-smooth
strictly \textit{mean concave} boundary of $\mathrm{H}_{\partial M}<0$. 
 Suppose, in addition to \eqref{elliptic}, \eqref{concave},  \eqref{addistruc},   
 $\varphi\in C^{2,1}(\partial M)$, $\inf_M \psi=\sup_{\partial \Gamma}f$ and $\psi\in C^{2}(\bar M)$, that
 Dirichlet problem \eqref{mainequ0}-\eqref{mainequ1}   has
 a strictly $C^{2,1}$-smooth admissible subsolution. 
 Then the Dirichlet problem admits a   $C^{1,1}$ weak solution with  $\lambda(\mathfrak{g}[u])\in \overline{\Gamma}$
   and $\Delta u\in L^\infty(\bar M)$.

 \end{theorem}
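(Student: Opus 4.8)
The plan is to obtain the solution as a limit of admissible solutions of non-degenerate problems, the only estimate sensitive to the degeneracy being the boundary second-order estimate of Theorem~\ref{quan-boundaryestimate-thm1}. I would carry out a double approximation of the data. First, for $\epsilon\in(0,\delta_0)$ replace $\psi$ by $\psi+\epsilon$, so that $\delta_{\psi+\epsilon,f}=\epsilon>0$; since the given subsolution is \emph{strict}, say $f(\lambda(U[\underline{u}]))\geq\psi+\delta_0$ with $\underline{u}=\varphi$ on $\partial M$, it remains an admissible subsolution for the datum $\psi+\epsilon$ once $\epsilon<\delta_0$. Second, since $\partial M$, $\varphi$, $\psi$ and $\underline{u}$ are only $C^{2,1}$, mollify them to smooth $\partial M^{\tau}$, $\varphi^{\tau}$, $\psi^{\tau}$, $\underline{u}^{\tau}$ converging in $C^{2,1}$ (resp.\ in $C^{1}$ for $\psi$), preserving the strict subsolution property for $\epsilon,\tau$ small. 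For each pair $(\epsilon,\tau)$ this is a non-degenerate Dirichlet problem with smooth data and a strict admissible subsolution, hence solvable with a smooth admissible solution $u^{\epsilon,\tau}$ by the standard theory for non-degenerate Dirichlet problems admitting a subsolution (continuity method and a priori estimates; cf.\ \cite{yuan2017,yuan2019}).

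The crux is a bound for $\{u^{\epsilon,\tau}\}$ in $C^{1,1}(\bar M)$ independent of $\epsilon$ and $\tau$. Uniform $C^{0}$- and $C^{1}$-bounds are available for this class of equations (using the subsolution and admissibility), the gradient bound being independent of $(\delta_{\psi,f})^{-1}$ thanks to \eqref{addistruc} (equivalently \eqref{charact-yuan}). For the second derivatives I would reduce the global bound to the boundary in the usual way: the maximum over $\bar M$ of $\Delta u^{\epsilon,\tau}$ weighted by a suitable auxiliary function is attained either in $M$, where \eqref{concave} and \eqref{addistruc} force it to be $\leq C(1+\sup_{M}|\nabla u^{\epsilon,\tau}|^{2})$ with no dependence on $(\delta_{\psi,f})^{-1}$, or on $\partial M$, where Theorem~\ref{quan-boundaryestimate-thm1} gives $\sup_{\partial M}\Delta u^{\epsilon,\tau}\leq C(1+\sup_{M}|\nabla u^{\epsilon,\tau}|^{2})$ with $C$ bounded as $\delta_{\psi,f}\to0$. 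Here it is decisive that $\mathrm{H}_{\partial M}<0$ is \emph{strict}: the barriers behind that estimate then carry a definite negative margin, which both absorbs the terms that would become singular as $\epsilon\to0$ and lets one run the argument using only $\|\varphi\|_{C^{2,1}}$ and $\|\partial M\|_{C^{2,1}}$ --- the third derivatives of $\varphi$ and of $\partial M$ entering only through their $L^{\infty}$ norms --- so that $C$ stays bounded as $\tau\to0$. Combined with the $C^{1}$-bound and with $\lambda(U[u^{\epsilon,\tau}])\in\Gamma\subset\Gamma_{1}$ (for a lower bound on $\Delta u^{\epsilon,\tau}$), this gives $|\Delta u^{\epsilon,\tau}|\leq C$; as in the proof of Theorem~\ref{mainthm1-de}, this together with admissibility and the equation yields a uniform bound in $C^{1,1}(\bar M)$.

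It remains to pass to the limit. The uniform $C^{1,1}$-bound makes $\{u^{\epsilon,\tau}\}$ precompact in $C^{1,\alpha}(\bar M)$ for every $\alpha<1$ and weak-$\ast$ precompact in $W^{2,\infty}(\bar M)$; extracting a subsequence along which $\epsilon,\tau\to0$ yields $u\in C^{1,1}(\bar M)$ with $\Delta u\in L^{\infty}(\bar M)$. Since $\varphi^{\tau}\to\varphi$ and $\partial M^{\tau}\to\partial M$, the boundary condition $u=\varphi$ on $\partial M$ is preserved. The first-order part of $U[u^{\epsilon,\tau}]$ converges uniformly and the Hessians converge weak-$\ast$, so $\lambda(U[u])\in\overline{\Gamma}$ a.e., since $\overline{\Gamma}$ is closed and convex. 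Each $u^{\epsilon,\tau}$ is a classical, hence viscosity, admissible solution of $f(\lambda(U[\,\cdot\,]))=\psi^{\tau}+\epsilon$, and these data converge locally uniformly to $\psi$; by stability of viscosity solutions $u$ solves \eqref{mainequ0} in the viscosity sense, and since $u\in C^{1,1}(\bar M)$ is twice differentiable a.e., the equation holds a.e.\ in $M$. Thus $u$ is the asserted weak solution.

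I expect the main obstacle to be the boundary second-order estimate of the second paragraph: making Theorem~\ref{quan-boundaryestimate-thm1} simultaneously uniform as $\delta_{\psi,f}\to0$ and valid for merely $C^{2,1}$ boundary and boundary data. The strict mean concavity $\mathrm{H}_{\partial M}<0$ is exactly the structural input that makes both possible, and it is what fails on general domains, where by the counterexamples of \cite{CNS-deg} the $C^{3,1}$-regularity of the data is optimal for a $C^{1,1}$ weak solution.
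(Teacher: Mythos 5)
Your proposal follows essentially the same strategy the paper has in mind for Theorem~\ref{proposition-quar-yuan1-thm}: a double approximation (a nondegenerate datum $\psi+\epsilon$ and smooth regularizations $\partial M^{\tau},\varphi^{\tau},\underline{u}^{\tau}$), uniform $C^{1,1}$ estimates coming from Theorem~\ref{quan-boundaryestimate-thm1} combined with the global second-order estimate \eqref{2-seglobal} and $\mathrm{tr}_{g}(U[u])>0$, and passage to the limit. The paper does not write this out, but this is exactly what the final paragraphs of Section~\ref{Proofofmainestimate} indicate, so at the level of structure you are aligned. Two points in your write-up are imprecise and worth flagging.

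First, your explanation of the role of the strict inequality $\mathrm{H}_{\partial M}<0$ is not quite the right one. You attribute it to ``a definite negative margin which absorbs the terms that would become singular as $\epsilon\to0$'' --- but Theorem~\ref{quan-boundaryestimate-thm1} and Propositions~\ref{proposition-quar-yuan1}, \ref{mix-prop1} are already uniform in $(\delta_{\psi,f})^{-1}$ for a merely mean concave boundary $\mathrm{H}_{\partial M}\leq0$; nothing degenerates as $\epsilon\to0$. Strictness is what is needed for the \emph{$\tau$}-limit: since the mean curvature is a second-order quantity and $\partial M$ is only $C^{2,1}$, smoothing the boundary produces $\mathrm{H}_{\partial M^{\tau}}\to\mathrm{H}_{\partial M}$ in $C^{0}$ only, and a nonstrictly mean concave boundary could lose the sign condition under mollification, invalidating Proposition~\ref{proposition-quar-yuan1} for the approximants. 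Strict negativity ensures $\mathrm{H}_{\partial M^{\tau}}<0$ for $\tau$ small, which is why Theorem~\ref{mainthm1-de} needs strictness only when $\partial M\in C^{2,1}$ but not when $\partial M$ is smooth. Your observation that the $C^{3}$-dependence of the constant in Theorem~\ref{quan-boundaryestimate-thm1} enters only through $L^{\infty}$ norms of third derivatives, which are bounded by $\|\varphi\|_{C^{2,1}}$ and $\|\partial M\|_{C^{2,1}}$ under mollification, is the correct reason the $\tau$-limit goes through once mean concavity of $\partial M^{\tau}$ is secured.

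Second, you assert at the outset that uniform $C^{1}$-bounds are ``available for this class of equations \ldots thanks to \eqref{addistruc}'' and then use them to close the estimate $|\Delta u^{\epsilon,\tau}|\leq C$, but you never say where the gradient bound comes from. The paper's logical order is the reverse of yours: one first obtains the quadratic estimate \eqref{mainestimate2}, namely $\sup_{M}\Delta u\leq C(1+\sup_{M}|\nabla u|^{2})$, from \eqref{good-quard} and \eqref{2-seglobal}, and only then derives the gradient bound via the blow-up argument and a Liouville-type theorem (this is precisely where \eqref{addistruc} enters, through Sz\'ekelyhidi's framework). A direct gradient estimate that avoids the blow-up argument is available for $\eta=0$ (Urbas, Guan--Jiao), as the paper notes, but the general case with the gradient term handled here goes through the blow-up route. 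As stated, your step would be a gap; it is easily repaired by interchanging the order of the two estimates, after which the scheme coincides with the paper's.
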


The paper is organized as follows. 
In Section \ref{Proofofmainestimate} we mainly derive the quantitative boundary estimate and then apply it to complete the proof of main theorem.
 In Section \ref{Kahlercase} we summarize the analogous result in the counterpart of complex variables.
 In Appendix \ref{appendix}, under certain assumptions, we can construct strictly
  admissible subsolutions for equations on certain topologically product spaces.
  In Appendix \ref{append-B}, with an additional assumption, we briefly discuss quantitative boundary estimate in more general case.



\section{Proof of main results}
\label{Proofofmainestimate}



\subsection{Sketch of proof of Theorem \ref{quan-boundaryestimate-thm1}} \label{Sketchtheproof}

 First of all, we see that equation \eqref{mainequ0} is of the form \eqref{mainequ20172019} with
 $$\mathfrak{g}[u]=\chi+\nabla^2 u+W(d u),$$
  where $\chi=\frac{1}{n-1}(\mathrm{tr}_g A)g-A$. 
 Then, in main equation \eqref{mainequ0}, $U[u]=(\mathrm{tr}_g \mathfrak{g}[u])g-\mathfrak{g}[u]$.

 For simplicity  we denote $\mathfrak{g}=\mathfrak{g}[u] \mbox{ and }\mathfrak{\underline{g}}=\mathfrak{g}[\underline{u}]$ for $u$
and $\underline{u}$, respectively.
One denotes
 \begin{equation}
 \label{eigenvalues-denote1}
\begin{aligned}
 \lambda(\mathfrak{g}[u])=(\lambda_1,\cdots,\lambda_n) \mbox{ and } \lambda(U[u])=(\mu_1,\cdots,\mu_n), \nonumber
 \end{aligned}
\end{equation}
 then $\mu_i=\lambda_1+ \cdots+ \hat{\lambda}_i+\cdots +\lambda_n$.
 Let $P': \Gamma \longrightarrow P'(\Gamma)=:\widetilde{\Gamma}$ be a map given by
\begin{equation}
\begin{aligned}
(\mu_1,\cdots,\mu_n)  \longrightarrow (\lambda_1,\cdots,\lambda_n)=(\mu_1,\cdots,\mu_n) Q^{-1}, \nonumber
\end{aligned}
\end{equation}
where $Q=(q_{ij})$ and $q_{ij}=1-\delta_{ij}$ ($Q$ is symmetric). Here $Q^{-1}$ is well defined, since
 $\mathrm{det}Q=(-1)^{n-1} (n-1)\neq 0$.
Then $\widetilde{\Gamma}$ is also an open symmetric convex cone of $\mathbb{R}^n$, and
we thus define $\tilde{f}: \widetilde{\Gamma}\rightarrow \mathbb{R}$ by $f(\mu)=\tilde{f}(\lambda).$
So equation \eqref{mainequ0} is 
\begin{equation}
\label{mainequ2}
\begin{aligned}
\tilde{f}(\lambda(\mathfrak{g}[u]))
=\psi. \nonumber
\end{aligned}
\end{equation}
That is, equation \eqref{mainequ0} is of the form \eqref{mainequ20172019}.
We can verify that if $f$ satisfies \eqref{elliptic}, \eqref{concave}  and \eqref{addistruc} in ${\Gamma}$, then so does $\tilde{f}$ in $\widetilde{\Gamma}$.

The quantitative boundary estimate follows immediately from Propositions \ref{proposition-quar-yuan1} and \ref{mix-prop1}.

\begin{proposition}
\label{proposition-quar-yuan1}
Let $(M, g)$ be a compact Riemannian manifold with 
\textit{mean concave} boundary. 
Suppose, in addition to \eqref{elliptic}, \eqref{concave}, \eqref{nondegenerate},
 that for $\psi\in C^0(\bar M)$ and $\varphi\in C^2(\partial M)$
 there is a $C^2$-admissible subsolution obeying \eqref{existenceofsubsolution}.
Let $u\in   C^{2}(\bar M)$ be an \textit{admissible} solution
 to Dirichlet problem \eqref{mainequ0}-\eqref{mainequ1}.
Fix $x_0\in \partial M$.
Then
there is a uniformly positive constant $C$ depending  only on   $|u|_{C^0(\bar M)}$,
$|\underline{u}|_{C^{2}(\bar M)}$, $\partial M$ up to second order derivatives
 and other known data
 (but neither on $\sup_{M}|\nabla u|$ nor on $(\delta_{\psi,f})^{-1}$) such that
\begin{equation}
\label{yuan-prop-dual}
\begin{aligned}
 \mathrm{tr}_g(\mathfrak{g})(x_0) \leq C\left(1 + \sum_{\alpha=1}^{n-1} |\mathfrak{g}(\mathrm{e}_\alpha, \nu)(x_0)|^2\right), 
\end{aligned}
\end{equation}
where $\mathrm{e}_\alpha, \mathrm{e}_\beta \in T_{\partial M, x_0}$  $(\alpha,\beta=1,\cdots, n-1)$ with
  $g(\mathrm{e}_\alpha, \mathrm{e}_{\beta})(x_0) =\delta_{\alpha\beta}$.
\end{proposition}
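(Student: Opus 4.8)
\textbf{Proof proposal for Proposition \ref{proposition-quar-yuan1}.}

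The plan is to construct, near the fixed point $x_0\in\partial M$, a suitable barrier function and to exploit the \emph{mean concave} hypothesis $\mathrm{H}_{\partial M}\le 0$ to control the purely tangential part of $\mathfrak{g}$ at $x_0$, while the mixed tangential-normal components are left on the right-hand side. First I would work in a boundary normal coordinate frame $\{\mathrm{e}_1,\dots,\mathrm{e}_{n-1},\mathrm{e}_n=\nu\}$ adapted to $x_0$, with $g_{ij}(x_0)=\delta_{ij}$, and use the boundary condition $u=\underline u=\varphi$ on $\partial M$ together with $u\ge\underline u$ (which follows from the comparison/maximum principle applied to the admissible subsolution, since $\tilde f$ is elliptic and concave) to get that the \emph{tangential} second derivatives of $u$ agree with those of $\underline u$ at $x_0$ up to first-order terms; equivalently, $\mathfrak{g}(\mathrm{e}_\alpha,\mathrm{e}_\beta)(x_0)$ is controlled purely tangentially. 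The subtlety is the $\nu\nu$ (double-normal) component of $\mathfrak{g}$, which carries the full second-order information and which is exactly what must be bounded; this is where the barrier enters.

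The key steps, in order, are: (1) Reduce the estimate, via the linear change of variables $P'$ recalled in the sketch, to the equation $\tilde f(\lambda(\mathfrak{g}[u]))=\psi$, so that one is estimating $\mathrm{tr}_g\mathfrak{g}$ rather than $\mathrm{tr}_g U[u]$; note $\tilde f$ inherits \eqref{elliptic}, \eqref{concave}, \eqref{addistruc}. (2) Set up the linearized operator $L = \sum_{i,j} F^{ij}(\nabla^2_{ij}\cdot + \text{lower order})$ where $F^{ij}=\partial\tilde f/\partial\mathfrak{g}_{ij}$ evaluated at the solution, and record that $L$ is elliptic with $\sum_i F^{ii}>0$ and, by concavity, $\sum_i F^{ii}(\mathfrak{\underline g}_{ii}-\mathfrak{g}_{ii})\ge \psi-\psi = 0$-type inequalities controlling $L(u-\underline u)$ from above. (3) Build a local barrier of the form $v = \underline u - u + t\rho - \frac{N}{2}\rho^2 + \text{tangential correction}$, where $\rho$ is (a modification of) the distance to $\partial M$; apply $L$ to $v$ and use that $L(-\tfrac N2\rho^2)$ produces a good negative term $-N\sum_i F^{ii}|\nabla\rho|^2$, while the dangerous term coming from $L(\rho)$ involves $\sum_\alpha F^{\alpha\alpha}$ times the second fundamental form, i.e. essentially the mean curvature contribution — and this is precisely where $\mathrm{H}_{\partial M}\le 0$ is used to ensure this term has a favorable sign (or is harmless). (4) Choose $N$ large and $t$ appropriately, invoke the maximum principle for $L$ on a half-ball $M\cap B_\delta(x_0)$ with $v\ge 0$ on the boundary of that half-ball, conclude $\partial v/\partial\nu\ge 0$ at $x_0$, which converts into an upper bound for $(u-\underline u)_{\nu\nu}(x_0)$, hence for $\mathfrak{g}(\nu,\nu)(x_0)$, modulo the tangential data and the mixed terms $\mathfrak{g}(\mathrm{e}_\alpha,\nu)(x_0)$. (5) Finally, since $\mathrm{tr}_g\mathfrak{g}(x_0)$ is the sum of the (already controlled) tangential entries and $\mathfrak{g}(\nu,\nu)(x_0)$, and since $\lambda(\mathfrak{g})\in\widetilde\Gamma\subset\Gamma_1$ forces the normal eigenvalue direction not to run to $-\infty$, assemble everything into \eqref{yuan-prop-dual}.

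I expect the main obstacle to be step (3)–(4): precisely identifying the error terms generated when $L$ is applied to the distance function $\rho$ and to the $\rho^2$ term on a curved manifold, and isolating exactly the combination $\sum_{\alpha=1}^{n-1}F^{\alpha\alpha}\,\mathrm{II}_{\partial M}(\mathrm{e}_\alpha,\mathrm{e}_\alpha)$ — one must show this is the only place where the geometry of $\partial M$ enters with a potentially bad sign, and that $\mathrm{H}_{\partial M}=\sum_\alpha \mathrm{II}_{\partial M}(\mathrm{e}_\alpha,\mathrm{e}_\alpha)\le 0$ together with $F^{\alpha\alpha}>0$ and a Newton-type inequality controls it. A secondary technical point is that the barrier must be engineered so that the constant $C$ does \emph{not} absorb any power of $(\delta_{\psi,f})^{-1}$ nor of $\sup_M|\nabla u|$; this requires keeping the gradient-dependent terms (from $Z(du)$ and from $L$ acting on lower-order parts) either on the right-hand side in the controlled quadratic form or dominated by the good negative term $-N\sum_i F^{ii}|\nabla\rho|^2$ before the final choice of constants. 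The concavity inequality \eqref{addistruc}/\eqref{charact-yuan} is what guarantees $\sum_i F^{ii}$ stays uniformly bounded below away from the degenerate regime, which is the structural reason the estimate is uniform in $\delta_{\psi,f}$.
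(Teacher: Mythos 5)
The proposal takes a genuinely different route from the paper, and it has gaps that I do not think can be repaired in the form you describe.

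The paper's actual argument is a \emph{pointwise} linear algebra argument, applied only at $x_0$, with no barrier and no maximum principle at the second-order level. One writes the matrix $\left(U_{ij}[u]\right)(x_0)$ in the form $A(R)$, where the $(n-1)\times(n-1)$ tangential block is $R\delta_{\alpha\beta}-\mathfrak{g}_{\alpha\beta}$, the $(n,n)$ corner is $\sum_\alpha\mathfrak{g}_{\alpha\alpha}$, the off-diagonal row/column is $-\mathfrak{g}_{\alpha n}$, and $R=\mathrm{tr}_g\mathfrak{g}$. A second matrix $\underline A(R)$ replaces the corner by $\sum_\alpha\underline{\mathfrak{g}}_{\alpha\alpha}$. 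The eigenvalue perturbation Lemma~\ref{yuan'slemma2} then shows that if $R=R_c$ is chosen to grow \emph{quadratically} in $\sum_\alpha|\mathfrak{g}_{\alpha n}|^2$, the eigenvalues of $\underline A(R_c)$ lie in $\Gamma$ and $F(\underline A(R_c))>\psi$; and mean concavity enters exactly once, via the boundary identity $\nabla^2(u-\underline u)(\mathrm{e}_\alpha,\mathrm{e}_\beta)=-\nabla_\nu(u-\underline u)\,\mathrm{II}_{\partial M}(\mathrm{e}_\alpha,\mathrm{e}_\beta)$ together with $\nabla_\nu(u-\underline u)\ge 0$ and $\mathrm{H}_{\partial M}\le 0$, which forces $\sum_\alpha\mathfrak{g}_{\alpha\alpha}\ge\sum_\alpha\underline{\mathfrak{g}}_{\alpha\alpha}$, hence $A(R)\ge\underline A(R)$. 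Monotonicity of $F$ then yields $F(A(R_c))>\psi=F(A(\mathrm{tr}_g\mathfrak{g}))$ and therefore $\mathrm{tr}_g\mathfrak{g}<R_c$. Every ingredient is evaluated at the single point $x_0$, which is what makes $C$ independent of $\sup_M|\nabla u|$ and of $(\delta_{\psi,f})^{-1}$.

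The specific gaps in your plan are the following. First, the barrier $v=\underline u-u+t\rho-\tfrac N2\rho^2+\cdots$ with the maximum principle gives $v\ge0$ near $x_0$ and hence $\nabla_\nu v(x_0)\ge 0$; this is a bound on a \emph{first} normal derivative, not on $(u-\underline u)_{\nu\nu}(x_0)$, and you do not explain how the conversion in step (4) is to be carried out. (The argument you sketch is essentially the one the paper uses for the \emph{mixed} tangential--normal bound in Proposition~\ref{mix-prop1}, not for the trace bound.) Second, the right-hand side of \eqref{yuan-prop-dual} is quadratic in the mixed terms $\mathfrak{g}(\mathrm{e}_\alpha,\nu)(x_0)$, a specific self-referential structure that comes from the growth condition in Lemma~\ref{yuan'slemma2}; a barrier argument has no mechanism to produce this quadratic dependence. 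Third, applying $L$ to $\rho$ and $\rho^2$ on a neighborhood necessarily brings in terms proportional to $\sup|\nabla u|$ from the $Z(du)$ part of the operator, which is precisely what the proposition must avoid; Proposition~\ref{mix-prop1} accepts such dependence and gets a bound linear in $\sup|\nabla u|$, but this proposition cannot. Finally, the role of mean concavity is not that $\sum_\alpha F^{\alpha\alpha}\,\mathrm{II}_{\partial M}(\mathrm{e}_\alpha,\mathrm{e}_\alpha)$ appears as an error term in $L(\rho)$ with a favorable sign; it is used purely algebraically at $x_0$ to get the \emph{trace} inequality $\sum_\alpha\mathfrak{g}_{\alpha\alpha}\ge\sum_\alpha\underline{\mathfrak{g}}_{\alpha\alpha}$ after contracting the second-fundamental-form identity, no $F^{\alpha\alpha}$ weights involved — this is what distinguishes the \emph{mean} concave hypothesis from full concavity.
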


This proposition is a crucial ingredient for studying degenerate equation \eqref{mainequ0} and for weakening the regularity assumptions on boundary and boundary data as well.
The proposition of this type is first proved by the author \cite{yuan2017} for equation \eqref{mainequ-Kahler} (with $\eta^{1,0}=0$) on compact
Hermitian manifolds with \textit{Levi flat} boundary that is later extended
 to more general case that $\partial M$ is \textit{pseudoconcave}  in \cite{yuan2019}.
 In final section of the same paper \cite{yuan2019},
  it was  extended to   more general fully nonlinear elliptic equations than Monge-Amp\`ere equation for
 $(n-1)$-plurisubharmonic functions associated to Gauduchon's conjecture on complex manifolds with mean pseudoconcave boundary.
 Clearly, when $\partial M$ is \textit{concave}
 the proof presented there automatically works for Dirichlet problem \eqref{mainequ20172019} and \eqref{mainequ1}
 in real variables, while for  Dirichlet problem \eqref{mainequ0}-\eqref{mainequ1}
we assume  in Proposition \ref{proposition-quar-yuan1} that $\partial M$ is only \textit{mean concave} that is much more general than the condition of \textit{concave} for $n\geq3$. 
 Moreover, with replaced $\mathrm{H}_{\partial M}\leq0$ by the condition that the Levi form of $\partial M$,
 denoted by $\mathcal{L}_{\partial M}$,
  has nonpositive trace, 
  we have a similar proposition in complex variables.

\vspace{1mm}
According to Proposition \ref{proposition-quar-yuan1}, the other issue for 
our 
goal is to prove that
 the bounds for mixed derivatives at the boundary can be controlled linearly by $L^\infty$-norm of gradient term. That is
  \begin{proposition}
 \label{mix-prop1}
 Let $(M,g)$ be a compact Riemannian manifold with boundary (but without restriction to second fundamental form of $\partial M$),
 and   $\psi\in C^{1}(\bar M)$, $\varphi\in C^{3}(\partial M)$.
In addition to \eqref{elliptic}, \eqref{concave} and \eqref{nondegenerate}, we suppose  Dirichlet problem
 \eqref{mainequ20172019} and \eqref{mainequ1} has a $C^2$ 
 subsolution with satisfying
  \begin{equation}
 \label{existenceofsubsolution20172019}
 \begin{aligned}
 f(\lambda(\mathfrak{g}[\underline{u}]))\geq \psi, \mbox{ } \lambda(\mathfrak{g}[\underline{u}])\in \Gamma \mbox{ in } M, \mbox{  }
 u|_{\partial M}=\varphi.
 \end{aligned}
 \end{equation}
 Then each solution $u\in C^3(M)\cap C^2(\bar M)$ to Dirichlet problem
 \eqref{mainequ20172019} and \eqref{mainequ1} with  $\lambda(\mathfrak{g}[u])\in \Gamma$
 satisfies
\begin{equation}
\label{quanti-mix-derivative-1}
\begin{aligned}
|\nabla^2 u(X,\nu)|\leq C 
(1+\sup_{M}|\nabla u|), \mbox{  }\forall X\in  T_{\partial M}   \mbox{ with } |X|=1,
\end{aligned}
\end{equation}
where $C$ depends on $|\varphi|_{C^{3}(\bar M)}$, $|\psi|_{C^{1}(\bar M)}$,
$|\underline{u}|_{C^{2}(\bar M)}$, $\partial M$
up to its third derivatives
and other known data (but neither on $\sup_{M}|\nabla u|$ nor on $(\delta_{\psi,f})^{-1}$).
 \end{proposition}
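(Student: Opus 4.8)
\medskip

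\noindent\textbf{Plan of proof.} The approach is the classical Caffarelli--Nirenberg--Spruck--Guan barrier argument for the mixed tangential--normal second derivatives, executed in a boundary chart, with the book-keeping arranged so that every error stays \emph{linear} in $\sup_M|\nabla u|$ and stays bounded as $\delta_{\psi,f}\to 0$. As in Subsection~\ref{Sketchtheproof} I would rewrite the equation as $F(\mathfrak{g}[u]):=f(\lambda(\mathfrak{g}[u]))=\psi$ with $\mathfrak{g}[u]=\chi+\nabla^2u+W(du)$, set $F^{ij}:=\partial F/\partial\mathfrak{g}_{ij}$ at $\mathfrak{g}[u]$ (positive definite by \eqref{elliptic}), $\mathcal{F}:=\sum_iF^{ii}$, and let $\mathcal{L}$ be the linearization $\mathcal{L}w:=F^{ij}\bigl(\nabla^2_{ij}w+\sum_kw_k\,\eta^k_{ij}\bigr)$, a linear elliptic operator with no zeroth--order term. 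From \eqref{concave} and \eqref{existenceofsubsolution20172019}, $\mathcal{L}(u-\underline{u})=F^{ij}\bigl(\mathfrak{g}[u]_{ij}-\mathfrak{g}[\underline{u}]_{ij}\bigr)\le F(\mathfrak{g}[u])-F(\mathfrak{g}[\underline{u}])\le 0$, so $u-\underline{u}$ is an $\mathcal{L}$--supersolution vanishing on $\partial M$ and $u\ge\underline{u}$ in $M$. I would also first record the boundary gradient bound $\sup_{\partial M}|\nabla u|\le C$ depending only on the stated data (tangentially $|\nabla u|=|\nabla\varphi|$; the lower bound on $\partial_\nu u$ from $u\ge\underline{u}$; the upper bound because $\lambda(\mathfrak{g}[u])\in\Gamma\subset\Gamma_1$ forces $\Delta u+b\cdot\nabla u\ge -C$ for a bounded field $b$, so $u$ lies below the solution of the corresponding linear problem).

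Fix $x_0\in\partial M$ and work in boundary normal coordinates with $\partial M=\{x_n=0\}$, $g_{ij}(x_0)=\delta_{ij}$, and $\rho\ge 0$ the distance to $\partial M$. For $\alpha<n$ put $W_\alpha:=\partial_\alpha(u-\underline{u})$; as $u=\underline{u}=\varphi$ on $\partial M$ and $\partial_\alpha$ is tangential, $W_\alpha\equiv0$ on $\partial M$ near $x_0$, and a direct computation reduces \eqref{quanti-mix-derivative-1} to the bound $|\partial_\nu W_\alpha(x_0)|\le C(1+\sup_M|\nabla u|)$ for each $\alpha<n$. Differentiating the equation in $x_\alpha$ (the third derivatives of $u$ reassemble into $\mathcal{L}(\partial_\alpha u)$ since $\partial_\alpha$ commutes with $\partial_i\partial_j$) yields $\mathcal{L}(\partial_\alpha u)=\partial_\alpha\psi-F^{ij}\bigl(\partial_\alpha\chi_{ij}+(\partial_\alpha\eta^k_{ij}-\partial_\alpha\Gamma^k_{ij})u_k\bigr)$ and $|\mathcal{L}(\partial_\alpha\underline{u})|\le C\mathcal{F}$, so
\begin{equation}
|\mathcal{L}W_\alpha|\le|\nabla\psi|+C\,\mathcal{F}\,(1+\sup_M|\nabla u|),\nonumber
\end{equation}
the linearity in $|\nabla u|$ coming precisely from the affine dependence of $\mathfrak{g}[u]$ on $\nabla u$. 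I would then compare $\pm W_\alpha$, on a small half--ball $B_r(x_0)\cap M$, against a barrier of the CNS--Guan shape built from $u-\underline{u}$, $\rho$ (through $\rho-\tfrac12K\rho^2$), $|x-x_0|^2$, and $\sum_{\gamma<n}W_\gamma^2$, with the coefficients scaled by the appropriate powers of $1+\sup_M|\nabla u|$ and taken in a suitable hierarchy; the differential inequality $\mathcal{L}(\pm W_\alpha-\Theta)\ge0$ in $B_r(x_0)\cap M$ together with $\pm W_\alpha\le\Theta$ on its boundary (where on $\partial M$ both sides vanish and on the spherical cap the $|x-x_0|^2$ term dominates) then gives, via the maximum principle, $|\partial_\nu W_\alpha(x_0)|\le|\partial_\nu\Theta(x_0)|$, which by the boundary gradient bound is $\le C(1+\sup_M|\nabla u|)$.

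The crucial analytic ingredient --- and the main obstacle --- is producing, in $\mathcal{L}\Theta$, a term negative enough to dominate $|\mathcal{L}W_\alpha|$ without any lower bound on $\mathcal{F}$, i.e.\ uniformly as $\delta_{\psi,f}\to0$. For this I would use a quantitative form of the concavity/subsolution inequality of the type established by Guan and used by the author in \cite{yuan2017,yuan2019}: there are uniform $\theta>0$ and $R>0$ so that at every point, \emph{either} $\mathcal{L}(u-\underline{u})\le-\theta(1+\mathcal{F})$ (which, fed through the $u-\underline{u}$ term of $\Theta$ with a coefficient $\ge c\,(1+\sup_M|\nabla u|)$, absorbs both the $\mathcal{F}$--weighted error and the pure $|\nabla\psi|$ error), \emph{or} $|\lambda(\mathfrak{g}[u])|\le R$, in which case $\mathcal{F}$ is pinched between positive constants, $\mathfrak{g}[u]$ --- hence $\nabla^2u$ --- is already $\le C(1+\sup_M|\nabla u|)$, and the negativity is supplied by the $-KF^{ij}\rho_i\rho_j$ part of $\mathcal{L}(\rho-\tfrac12K\rho^2)$ (uniformly elliptic there). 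Balancing these two regimes, keeping $\Theta$'s normal derivative at $x_0$ controlled by fixed data even though some coefficients of $\Theta$ must grow linearly in $\sup_M|\nabla u|$ (this is exactly where the preliminary boundary gradient estimate is indispensable), and fixing the constant hierarchy and the radius $r$ so that the comparison on the whole half--ball closes, is the technical heart of the proof; everything else is routine, and for $\eta$ one uses only the linearity of $W(du)$ in $\nabla u$, with no need for the hypothesis $\sum_{i\ne k}\eta^k_{ii}(p_0)=0$ here.
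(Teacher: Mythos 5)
Your plan follows the same architecture as the paper's proof: transfer to the $\mathfrak{g}[u]$ formulation, establish the boundary gradient bound via the subsolution/supersolution sandwich, then run a Caffarelli--Nirenberg--Spruck/Guan barrier comparison against $\pm\partial_\alpha(u-\underline{u})$ using a barrier built from $u-\underline{u}$, the distance functions, and the squared tangential derivatives $\sum_{\gamma<n}W_\gamma^2$, with coefficients weighted by powers of $1+\sup_M|\nabla u|$ (the paper's $\sqrt{b_1}$ weights), and close via the concavity/subsolution structure. The paper's barrier $\widetilde{\Psi}$ is exactly of this type, and the paper likewise uses Guan's Proposition 2.19 together with the $\mathcal{C}$-subsolution dichotomy, deferring the balancing of constants to \cite{yuan2017,yuan2019}.

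One imprecision in your sketch worth flagging: the dichotomy you invoke --- ``either $\mathcal{L}(u-\underline{u})\le -\theta(1+\mathcal{F})$ or $|\lambda(\mathfrak{g}[u])|\le R$'' --- is not the form of the lemma used in the paper and is not correct as stated. Guan's quantitative concavity result is phrased in terms of the unit normals $\nu_\lambda=Df(\lambda)/|Df(\lambda)|$ to the level sets of $f$ (if $|\nu_\lambda-\nu_{\mu}|\ge\beta$ then $\sum f_i(\lambda)(\mu_i-\lambda_i)\ge f(\mu)-f(\lambda)+\varepsilon(1+\sum f_i(\lambda))$), and closeness of normals does \emph{not} pin $|\lambda|$ to a compact set: e.g.\ for $f=\sum\log\lambda_i$ and $\mu=(1,\dots,1)$, the scaled points $\lambda=(t,\dots,t)$ satisfy $\nu_\lambda=\nu_\mu$ for every $t$. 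The corresponding Sz\'ekelyhidi-type dichotomy has a third alternative (all $f_i$ mutually comparable, giving uniform ellipticity even with $|\lambda|$ large). Your second case's conclusion (use $-KF^{ij}\rho_i\rho_j$, relying on uniform ellipticity) happens to cover that third alternative too, but the assertion that $\mathfrak{g}[u]$ is already bounded there does not; so the case split as you wrote it would need to be reorganized. In the paper this is exactly the part deferred to Proposition 4.7 of \cite{yuan2019}/Proposition 4.2 of \cite{yuan2017}, where the index $r$ from Guan's Proposition 2.19 is handled by a finer case analysis (sign and size of $\lambda_r$, Young's inequality against $\sum_{i\ne r}f_i\lambda_i^2$, and the concavity estimate for $f_r\lambda_r$).
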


\subsection{Bounds for $\sup_{M}|u|$ and $\sup_{\partial M}|\nabla u|$}
Let $x_0\in\partial M$, $\sigma$ be the distance function to boundary, and $\rho$ be the distance function to the given point $x_0$, and $$\Omega_\delta=\{x\in M: \rho(x)<\delta\}.$$
We construct the supersolution $w\in C^2(\bar M)$ by solving $ \mathrm{tr}_g(\mathfrak{g}[w]) = 0   \mbox{ in } M,  \mbox{  }
w  =\varphi    \mbox{ on } \partial  M.$
The existence follows from the theory of PDEs, and the maximum principle 
implies $\underline{u}\leq u  \leq w   \mbox{ in } M,$
and then yields,   
on $\partial M$,
\begin{equation}
\label{formular3}
\begin{aligned}
\nabla_{\mathrm{e}_\alpha}u\,&=\nabla_{\mathrm{e}_\alpha}\underline{u}, \mbox{  }
 \nabla_{\nu}\underline{u}\leq  \nabla_{\nu}u\leq \nabla_\nu w, \\
\nabla^2 (u-\underline{u})(\mathrm{e}_\alpha,\mathrm{e}_\beta)
\,& =-\nabla_{\nu}(u-\underline{u}) \mathrm{II}_{\partial M} (\mathrm{e}_\alpha,\mathrm{e}_\beta), \mbox{ } \forall  \mathrm{e}_\alpha, \mathrm{e}_\beta\in T_{\partial M}.
\end{aligned}
\end{equation}
Thus
\begin{equation}
\label{c0-bdr-c1}
\begin{aligned}
\sup_{M}|u| + \sup_{\partial M}|\nabla u |  \leq C,
\end{aligned}
\end{equation}
\begin{equation}
\label{puretangential}
\begin{aligned}
\sup_{\partial M}|\nabla^2 u(\mathrm{e}_\alpha,\mathrm{e}_\beta)  |\leq C,
\mbox{ } \forall  \mathrm{e}_\alpha, \mathrm{e}_\beta\in T_{\partial M}, |\mathrm{e}_\alpha|=|\mathrm{e}_\beta|=1.
\end{aligned}
\end{equation}



\subsection{Proof of Proposition \ref{proposition-quar-yuan1}}
To complete the proof, 
we need the following lemma 
which follows from an idea and  deformation argument 
from the Lemma 3.1 in \cite{yuan2017} 
(or equivalently Lemma 2.2 of \cite{yuan2019}).
\begin{lemma}
 \label{yuan'slemma2}
Let $\mathrm{H}$ be an $n\times n$ 
symmetric matrix of the form
\begin{equation}\label{matrix3}\left(\begin{matrix}
\mathrm{{\bf a}}+d_1&&  &&a_{1}\\ &\mathrm{{\bf a}}+d_2&& &a_2\\&&\ddots&&\vdots \\ && &  \mathrm{{\bf a}}+d_{n-1}& a_{n-1}\\
 a_1& a_2&\cdots&  a_{n-1}& d_n\nonumber
\end{matrix}\right)\end{equation}
with $d_1,\cdots, d_{n}, a_1,\cdots, a_{n-1}$ fixed, and with $\mathrm{{\bf a}}$ variable.
Denote $\lambda_1,\cdots, \lambda_n$ by   the eigenvalues of $\mathrm{H}$.
Let $\epsilon>0$ be a fixed constant.
Suppose that  the parameter $\mathrm{{\bf a}}$ in $\mathrm{H}$ satisfies  the quadratic
 growth condition
  \begin{equation}
 \begin{aligned}
\label{guanjian1-yuan}
\mathrm{{\bf a}}\geq \frac{2n-3}{\epsilon}\sum_{i=1}^{n-1}|a_i|^2 +(n-1)\sum_{i=1}^{n} |d_i| + \frac{(n-2)\epsilon}{2n-3}. \nonumber
\end{aligned}
\end{equation}
 Then the eigenvalues (possibly with an order) behavior like
\begin{equation}
\begin{aligned}
 |\mathrm{{\bf a}}+d_{\alpha}-\lambda_{\alpha}|
<   \epsilon, \mbox{  }\forall 1\leq \alpha\leq n-1,
 \mbox{  }
|d_n-\lambda_{n}|
< (n-1)\epsilon. \nonumber
\end{aligned}
\end{equation}
\end{lemma}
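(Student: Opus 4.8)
The plan is to prove this by a short spectral argument that reduces everything to Cauchy's interlacing theorem and the trace identity; the quadratic growth hypothesis on $\mathrm{{\bf a}}$ is consumed in a single Rayleigh quotient estimate, and no step is really hard.

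First I would write the eigenvalues of $\mathrm{H}$ in decreasing order as $\lambda_1\geq\cdots\geq\lambda_n$ and let $p_1\geq\cdots\geq p_{n-1}$ be the decreasing rearrangement of $\mathrm{{\bf a}}+d_1,\dots,\mathrm{{\bf a}}+d_{n-1}$; the latter are precisely the eigenvalues of the leading $(n-1)\times(n-1)$ principal submatrix of $\mathrm{H}$, which is diagonal. Cauchy interlacing then gives $\lambda_i\geq p_i\geq\lambda_{i+1}$ for $1\leq i\leq n-1$; in particular $\lambda_i-p_i\geq 0$ for every $i\leq n-1$, while $\lambda_n\leq p_{n-1}$ and also $\lambda_n\leq\langle\mathrm{H}e_n,e_n\rangle=d_n$ (testing the Rayleigh quotient on the last coordinate vector $e_n$).

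Next I would localize the single small eigenvalue $\lambda_n$. For a unit vector $v=(v_1,\dots,v_n)$ put $s^2:=\sum_{\alpha=1}^{n-1}v_\alpha^2$, so $v_n^2=1-s^2$; bounding the diagonal contributions crudely and the cross term by Cauchy--Schwarz gives
\[
v^{\mathrm T}\mathrm{H}v\ \geq\ \Bigl(\mathrm{{\bf a}}-\sum_{i=1}^{n}|d_i|\Bigr)s^2-2\Bigl(\sum_{i=1}^{n-1}|a_i|^2\Bigr)^{1/2}s+d_n .
\]
The right-hand side is a quadratic in $s$ opening upward; minimizing it over $s\in\mathbb{R}$ and using $\mathrm{{\bf a}}-\sum_{i=1}^{n}|d_i|\geq\frac{2n-3}{\epsilon}\sum_{i=1}^{n-1}|a_i|^2$, which is immediate from the growth condition, yields $\lambda_n=\min_{|v|=1}v^{\mathrm T}\mathrm{H}v\geq d_n-\frac{\epsilon}{2n-3}$. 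Combined with $\lambda_n\leq d_n$ this gives $0\leq d_n-\lambda_n\leq\frac{\epsilon}{2n-3}<(n-1)\epsilon$.

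Finally I would spread this smallness to the remaining eigenvalues via the trace. Since $\mathrm{tr}\,\mathrm{H}=\sum_{\alpha=1}^{n-1}(\mathrm{{\bf a}}+d_\alpha)+d_n=\sum_{i=1}^{n-1}p_i+d_n$, subtracting $\lambda_n$ gives $\sum_{i=1}^{n-1}(\lambda_i-p_i)=d_n-\lambda_n\leq\frac{\epsilon}{2n-3}$, and since every summand is nonnegative each of them satisfies $0\leq\lambda_i-p_i\leq\frac{\epsilon}{2n-3}<\epsilon$. Relabelling the eigenvalues so that $\lambda_\alpha$ is the one paired with $\mathrm{{\bf a}}+d_\alpha$ in the rearrangement then produces $|\mathrm{{\bf a}}+d_\alpha-\lambda_\alpha|<\epsilon$ for $1\leq\alpha\leq n-1$ and $|d_n-\lambda_n|<(n-1)\epsilon$, as asserted. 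The only point that needs care is the Rayleigh estimate for $\lambda_n$, where the exact form of the quadratic growth bound is used; alternatively $\lambda_n$ can be pinned down from the monotonicity of the secular function of $\mathrm{H}$, or via a deformation in the off-diagonal entries $a_i$ as in Lemma 3.1 of \cite{yuan2017}, reducing first by a small perturbation to the generic case in which the $d_\alpha$ are distinct and the $a_i$ nonzero.
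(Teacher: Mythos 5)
Your proof is correct, and it is cleaner and more elementary than the route the paper takes. The paper does not reprove this lemma here; it cites a ``deformation argument'' from Lemma~3.1 of \cite{yuan2017} (equivalently Lemma~2.2 of \cite{yuan2019}), which apparently controls each of $\lambda_1,\dots,\lambda_{n-1}$ to within $\epsilon$ of $\mathrm{\bf a}+d_\alpha$ by perturbing the off-diagonal entries, and then recovers the last eigenvalue from the trace --- picking up the factor $(n-1)$ in the bound $|d_n-\lambda_n|<(n-1)\epsilon$. Your argument runs in the opposite direction: Cauchy interlacing against the diagonal leading $(n-1)\times(n-1)$ block gives the one-sided bounds $\lambda_i-p_i\ge 0$ and $\lambda_n\le d_n$ for free; a single Rayleigh-quotient computation, whose quadratic-in-$s$ structure is exactly what the quadratic growth hypothesis on $\mathrm{\bf a}$ is tailored to, pins $\lambda_n$ from below; and then the trace identity $\sum_{i<n}(\lambda_i-p_i)=d_n-\lambda_n$ together with nonnegativity of each summand spreads the smallness to the remaining eigenvalues. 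This yields the \emph{stronger} uniform bound $|p_i-\lambda_i|\le \tfrac{\epsilon}{2n-3}$ and $|d_n-\lambda_n|\le\tfrac{\epsilon}{2n-3}$ for all $n$ eigenvalues, which for $n\ge 3$ comfortably implies the asymmetric $\epsilon$ / $(n-1)\epsilon$ bounds in the statement. All the individual steps check out: the reduction $\mathrm{\bf a}-\sum_i|d_i|\ge\frac{2n-3}{\epsilon}\sum|a_i|^2$ follows from the growth condition since $(n-2)\sum|d_i|+\frac{(n-2)\epsilon}{2n-3}\ge 0$; the absorption of $d_nv_n^2\ge d_n-|d_n|s^2$ into the quadratic is what produces the full $\sum_{i=1}^n|d_i|$ in the coefficient of $s^2$; and minimizing the lower-bound quadratic over $s\in\mathbb{R}$ rather than $s\in[0,1]$ only weakens the bound, so remains valid. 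The closing remark about the secular function or a deformation argument is superfluous but does not affect correctness.
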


\vspace{1mm}
\begin{proof}
[Proof of Proposition \ref{proposition-quar-yuan1}]
Inspired  by  an idea from  \cite{yuan2019}, we 
 give the proof
  based on the structure of \eqref{mainequ0}.

Given $x_0\in \partial M$. 
We choose  local coordinates $x=(x_1,\cdots,x_n)$ centered at $x_0$,
such that  at $x_0$, $g_{ij}=\delta_{ij}$ and $ ({\mathfrak{g}}_{\alpha\beta})$ is diagonal, $\frac{\partial}{\partial x_n}$  is
the unit inner normal vector.  In what follows all the discussions
  will be given at $x_0$, and  the Greek letters $\alpha, \beta$ range from $1$ to $n-1$.
Let's denote
\begin{equation}
{A}(R)=\left(
\begin{matrix}
R-\mathfrak{{g}}_{11}&&  &-\mathfrak{g}_{1 n}\\
&\ddots&&\vdots \\
& &  R-\mathfrak{{g}}_{{(n-1)} (n-1)}&- \mathfrak{g}_{(n-1) n}\\
-\mathfrak{g}_{n 1}&\cdots& -\mathfrak{g}_{n{(n-1)}}& \sum_{\alpha=1}^{n-1}\mathfrak{g}_{\alpha\alpha}  \nonumber
\end{matrix}
\right),
\end{equation}
\begin{equation}
\underline{A}(R)=\left(
\begin{matrix}
R-\mathfrak{{g}}_{11}&&  &-\mathfrak{g}_{1 n}\\
&\ddots&&\vdots \\
& & R-\mathfrak{{g}}_{{(n-1)} (n-1)}&- \mathfrak{g}_{(n-1) n}\\
-\mathfrak{g}_{n 1}&\cdots& -\mathfrak{g}_{n{(n-1)}}& \sum_{\alpha=1}^{n-1}\underline{\mathfrak{g}}_{\alpha\alpha}  \nonumber
\end{matrix}
\right),
\end{equation}
and
\begin{equation}
B(R)=\left(
\begin{matrix}
R-\mathfrak{\underline{g}}_{11}&-\mathfrak{\underline{g}}_{1 2}&\cdots &-\mathfrak{\underline{g}}_{1{(n-1)}} &-\mathfrak{\underline{g}}_{1n}\\
-\mathfrak{\underline{g}}_{21} &R-\mathfrak{\underline{g}}_{22}&\cdots& -\mathfrak{\underline{g}}_{2{(n-1)}}&-\mathfrak{\underline{g}}_{2n}\\
\vdots&\vdots&\ddots&\vdots&\vdots \\
-\mathfrak{\underline{g}}_{(n-1)1}&-\mathfrak{\underline{g}}_{(n-1)2}& \cdots&  R-\mathfrak{\underline{g}}_{{(n-1)} {(n-1)}}& -\mathfrak{\underline{g}}_{(n-1)n}\\
-\mathfrak{\underline{g}}_{n1}&-\mathfrak{\underline{g}}_{n2}&\cdots&-\mathfrak{\underline{g}}_{n {(n-1)}}& \sum_{\alpha=1}^{n-1}\underline{\mathfrak{g}}_{\alpha\alpha}  \nonumber
\end{matrix}\right).\end{equation}
In particular, $A(\mathrm{tr}_g(\mathfrak{g}))=\left(U_{ij}([u])\right)$ and $B(\mathrm{tr}_g(\mathfrak{\underline{g}}))=\left(U_{ij}([\underline{u}])\right)$.
  Lemma \ref{yuan'slemma2}  implies $$(R-\underline{\lambda}'_1,\cdots, R-\underline{\lambda}'_{n-1},\sum_{\alpha=1}^{n-1}\underline{\mathfrak{g}}_{\alpha\alpha})\in\Gamma \mbox{ for } R\gg1,$$
here we use 
the openness of $\Gamma$.

 The ellipticity and concavity of equation, couple with Lemma 6.2 in \cite{CNS3}, 
 yield that
\begin{equation}
\label{A-B}
\begin{aligned}
F(A)-F(B)\geq F^{i j}(A)(a_{i j}-b_{i j}) \nonumber
\end{aligned}
\end{equation}
for  the symmetric matrices $A=\{a_{i j} \}$ and $B=\{b_{i j}\}$ with $\lambda(A)$, $\lambda(B)\in \Gamma$.
One thus obtains that there is $R_1>0$ depending only on $\mathfrak{\underline{g}}$   such that
\begin{equation}
\label{opppp-riemann}
\begin{aligned}
  f(R_1-\underline{\lambda}'_1,\cdots, R_1-\underline{\lambda}'_{n-1},\sum_{\alpha=1}^{n-1}\underline{\mathfrak{g}}_{\alpha\alpha})\geq F(\lambda(B(\mathrm{tr}_g(\mathfrak{\underline{g}}))))\geq \psi,  \nonumber
\end{aligned}
\end{equation}
$(R_1-\underline{\lambda}'_1,\cdots, R_1-\underline{\lambda}'_{n-1},\sum_{\alpha=1}^{n-1}\underline{\mathfrak{g}}_{\alpha\alpha})\in\Gamma,$ here we use 
  the fact that if $A$ is diagonal then so is $F^{ij}(A)$.
Therefore, 
 there exist two uniformly positive constants $\varepsilon_{0}$, $R_{0}$
 depending  on  $\mathfrak{\underline{g}}$  and $f$, such that
\begin{equation}
\label{opppp}
\begin{aligned}
\,& f(R_0-\underline{\lambda}'_1-\varepsilon_{0},\cdots, R_0-\underline{\lambda}'_{n-1}-\varepsilon_{0},\sum_{\alpha=1}^{n-1}\underline{\mathfrak{g}}_{\alpha\alpha}-\varepsilon_{0})\geq \psi
\end{aligned}
\end{equation}
and
 $(R_0-\underline{\lambda}'_1-\varepsilon_{0},\cdots, R_0-\underline{\lambda}'_{n-1}-\varepsilon_{0},\sum_{\alpha=1}^{n-1}\underline{\mathfrak{g}}_{\alpha\alpha}-\varepsilon_{0})\in \Gamma$.
Here  \eqref{elliptic}, \eqref{concave} 
and the openness of
$\Gamma$ are  needed. 

Next, we apply Lemma  \ref{yuan'slemma2} to matrix $\underline{A}(R)$. Let
\begin{equation}
\begin{aligned}
 R_c=\,& \frac{2(n-1)(2n-3)}{\varepsilon_0}
\sum_{\alpha=1}^{n-1} | \mathfrak{g}_{\alpha n}|^2
+(n-1)\sum_{\alpha=1}^{n-1} ( | \mathfrak{{g}}_{\alpha \alpha}| + | \mathfrak{\underline{g}}_{\alpha \alpha}|)
 +\sum_{\alpha=1}^{n-1}|\underline{\lambda}'_\alpha|
  +R_0   +\varepsilon_0, \nonumber
\end{aligned}
\end{equation}
where $\varepsilon_0$ and $R_0$ are fixed constants so that \eqref{opppp} holds.
It follows from  Lemma   \ref{yuan'slemma2}    that the eigenvalues of $\underline{A}(R_c)$ (possibly with an order) shall behavior like
\begin{equation}
\label{lemma12-yuan}
\begin{aligned}
\lambda(\underline{A}(R_c)) \in
(R_c- \mathfrak{g}_{11}-\frac{\varepsilon_0}{2},\cdots,
R_c- \mathfrak{g}_{(n-1)  {(n-1)}}-\frac{\varepsilon_0}{2},\sum_{\alpha=1}^{n-1}\mathfrak{\underline{g}}_{\alpha \alpha}-\frac{\varepsilon_0}{2}) +\overline{ \Gamma}_n \subset \Gamma.
\end{aligned}
\end{equation}

It follows from \eqref{formular3}, the technical hypothesis   $\sum_{i\neq k}\eta^{k}_{ii}(x_0)=0$, and  $\mathrm{H}_{\partial  M}\leq 0$ that 
$ \sum_{\alpha=1}^{n-1}\mathfrak{g}_{\alpha\alpha} \geq \sum_{\alpha=1}^{n-1}\underline{\mathfrak{g}}_{\alpha\alpha}$.
Thus
\begin{equation}
\label{puretangential2}
\begin{aligned}
A(R)\geq \underline{A}(R).
\end{aligned}
\end{equation}
This is the only place where we use the 
mean concavity of boundary and technical hypothesis on $\eta$ as well.
Thus
  \begin{equation}
 \label{nornor}
\begin{aligned}
\mathrm{tr}_g(\mathfrak{g}) < R_c\leq C  (1 + \sum_{\alpha=1}^{n-1} |\mathfrak{g}_{\alpha n}|^2 ), \nonumber
\end{aligned}
\end{equation}
since   \eqref{elliptic},  \eqref{opppp}, \eqref{lemma12-yuan} and  \eqref{puretangential2} imply
\begin{equation}
\begin{aligned}
F(A(R_c))\geq \,& F(\underline{A}(R_c))  \\
\geq\,&
f(R_c- \mathfrak{g}_{11}-\frac{\varepsilon_0}{2},\cdots,
R_c- \mathfrak{g}_{(n-1)  {(n-1)}}-\frac{\varepsilon_0}{2},\sum_{\alpha=1}^{n-1}\mathfrak{\underline{g}}_{\alpha \alpha}-\frac{\varepsilon_0}{2})
\\
> \,&
f(R_0-\underline{\lambda}'_1-\varepsilon_{0},\cdots, R_0-\underline{\lambda}'_{n-1}-\varepsilon_{0},\sum_{\alpha=1}^{n-1}\underline{\mathfrak{g}}_{\alpha\alpha}-\varepsilon_{0})\geq \psi. \nonumber
\end{aligned}
\end{equation}
Here we also use \eqref{formular3}.

\end{proof}

\begin{remark}
As pointed out in \cite{yuan2017}, \eqref{opppp} only requires the existence of (admissible)
$\mathcal{C}$-subsolution. Thus, in Proposition \ref{proposition-quar-yuan1} subsolutions can be replaced by
$\mathcal{C}$-subsolution\renewcommand{\thefootnote}{\fnsymbol{footnote}}\footnote{The admissible subsolution satisfying \eqref{existenceofsubsolution} is clearly a $\mathcal{C}$-subsolution  for equation \eqref{mainequ0}.}
 with the same boundary data. 
Moreover, the proof is still works for
\begin{equation}\label{mainequation-general}\begin{aligned}
f(\lambda(U[u]))=\psi(x,u,\nabla u)
\end{aligned}\end{equation} (without assumptions on  precise dependences of
$u$ and $\nabla u$ on $\psi$), if we further assume 
\begin{equation}\label{unbound-strong} \begin{aligned}
\lim_{t\rightarrow+\infty}f(\lambda_1+t,\cdots,\lambda_{n-1}+t,\lambda_n)=\sup_\Gamma f, \mbox{  } \forall \lambda=(\lambda_1,\cdots,\lambda_n)\in \Gamma,
\end{aligned}\end{equation}
including $f(\lambda)=\sum_{i=1}^n\log\lambda_i$ corresponding to Monge-Amp\`ere type equation.
Therefore, \eqref{yuan-prop-dual} holds for equation \eqref{mainequation-general} with \eqref{mainequ1}
when $f$ satisfies \eqref{unbound-strong} and the right-hand side depends on $u$ and $\nabla u$ as well.
\end{remark}


\subsection{Proof of Proposition \ref{mix-prop1}}

The proof of Proposition \ref{mix-prop1} is almost parallel to that of Proposition 4.2 in \cite{yuan2017}
as well as of Propositions 4.6-4.8 in \cite{yuan2019} for the equations on Hermitian manifolds.
For completeness we present the proof here.
Given $x_0\in \partial M$
 one has local coordinates
 \begin{equation}
 \label{coordinate1}
 \begin{aligned}
 x=(x_1,\cdots,x_n)
 \end{aligned}
 \end{equation}
 with origin at $x_0$ 
 such that $\partial M$ is locally given by $x_n=0$.
  Moreover, we can assume 
  $g_{ij}(x_0)=\delta_{ij}$.

   We will carry out the computations in such local coordinates, and we set $\nabla_i=\nabla_{\frac{\partial}{\partial x_i}}$,
 $\nabla_{ij}= 
 \frac{\partial^2}{\partial x_i\partial x_j} -\Gamma_{ji}^k \frac{\partial}{\partial x_k}$, with a similar convention for higher derivatives,
 where $\Gamma^k_{ij}$ are the Christoffel symbols defined by $\nabla_{\frac{\partial}{\partial x_j}}\frac{\partial}{\partial x_i}=\Gamma_{ji}^k \frac{\partial}{\partial x_k}$. Under Levi-Civita connection, $\Gamma_{ij}^l=\Gamma_{ji}^l$.
 By direct computation one has
\begin{equation}
\label{direct-compute1}
\begin{aligned}
\,&\nabla_{ij}\nabla_k u= \frac{\partial^3 u}{\partial x_i\partial x_j \partial x_k}-\Gamma_{ij}^l\frac{\partial^2 u}{\partial x_k\partial x_l},
\end{aligned}
\end{equation}
\begin{equation}
\label{direct-compute1*}
\begin{aligned}
\nabla_{ijk}u
=\,& \frac{\partial^3 u}{\partial x_i\partial x_j \partial x_k}-\Gamma_{ij}^l\frac{\partial^2 u}{\partial x_k\partial x_l}-\frac{\partial \Gamma_{ij}^l}{\partial x_k}\frac{\partial u}{\partial x_l}
+\Gamma_{ki}^l\nabla_{lj}u+\Gamma_{kj}^l \nabla_{li}u.
\end{aligned}
\end{equation}
Then $\nabla_{ij}\nabla_k u=\nabla_{ijk}u+\frac{\partial \Gamma_{ij}^l}{\partial x_k}\frac{\partial u}{\partial x_l}
-\Gamma_{ki}^l\nabla_{lj}u-\Gamma_{kj}^l \nabla_{li}u $ (note that the terms $\frac{\partial \Gamma_{ij}^l}{\partial x_k}$ appear in the formula). 
Under local coordinates \eqref{coordinate1}, 
we take the tangential operator on  boundary as
  \begin{equation}
  \label{tangential-operator12321-meng}
  \mathcal{T} 
  =\pm{\frac{\partial}{\partial x_{\alpha}}},
\mbox{ for } 1\leq \alpha \leq n-1.
\end{equation}

Let $\mathcal{L}$ be the linearized operator at $u$  of
equation \eqref{mainequ20172019}  which   is given by
\begin{equation}
\label{linearoperator2}
\begin{aligned}
\mathcal{L}v= F^{ij}\nabla_{ij} v+F^{ij}\eta_{ij}^k \nabla_k v,
 \mbox{ for } v\in C^{2}( M),  \nonumber
\end{aligned}
\end{equation}
where $F^{ij}=\frac{\partial F}{\partial a_{ij}}(\mathfrak{g}[u])$.

First of all, we have the following lemma. 
\begin{lemma}
\label{DN}
Let $u\in C^3(M)\cap C^1(\bar M)$ be an admissible solution to equation \eqref{mainequ20172019}.
There is a positive constant $C$ depending only on
$|\varphi|_{C^{3}(\bar M)}$, $|\chi|_{C^{1}(\bar M)}$, $\psi_{C^{1}(\bar M)}$, $\partial M$
up to its third derivatives
 and other known data (but not on $(\delta_{\psi,f})^{-1}$) such that
\begin{equation}
\label{pili1}
\begin{aligned}
\left|\mathcal{L}(\mathcal{T}(u-\varphi))\right|\leq C\left(1+(1+\sup_{ M}|\nabla u|)
\sum_{i=1}^n f_i+  \sum_{i=1}^n f_i|\lambda_{i}|\right), \mbox{ in } \Omega_{\delta} 
\end{aligned}
\end{equation}
  for some small $\delta>0$.
\end{lemma}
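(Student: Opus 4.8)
The plan is to estimate $\mathcal{L}(\mathcal{T}(u-\varphi))$ directly by differentiating the equation tangentially along the boundary and controlling every resulting term. First I would note that since $u-\varphi$ (extended suitably into $\Omega_\delta$) vanishes to first order on $\partial M$ in the tangential directions, the dangerous contributions are not the undifferentiated values but the terms arising from applying $\mathcal{L}=F^{ij}\nabla_{ij}+F^{ij}\eta_{ij}^k\nabla_k$ to a third derivative of $u$. So I would write $\mathcal{L}(\mathcal{T}u)=F^{ij}\nabla_{ij}(\mathcal{T}u)+F^{ij}\eta_{ij}^k\nabla_k(\mathcal{T}u)$ and, using the commutator formulas \eqref{direct-compute1} and \eqref{direct-compute1*}, commute $\mathcal{T}=\pm\partial/\partial x_\alpha$ past the covariant Hessian to produce $\mathcal{T}(F^{ij}\nabla_{ij}u)$ plus correction terms. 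The point of these formulas is that the third-order term $\partial^3 u/\partial x_i\partial x_j\partial x_\alpha$ cancels against $\mathcal{T}(\nabla_{ij}u)$, leaving only lower-order pieces: $\nabla_{ij}u$ contracted against $\partial\Gamma$, first derivatives of $u$ against $\partial^2\Gamma$, and Christoffel symbols against Hessians.

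Next I would differentiate the equation. Applying $\mathcal{T}$ to $F(\mathfrak{g}[u])=\psi$ gives $F^{ij}\nabla_{ij}(\mathcal{T}u)=\mathcal{T}\psi - F^{ij}\mathcal{T}(\chi_{ij}) - F^{ij}\mathcal{T}(W(du))_{ij}+(\text{commutator corrections})$, where the commutator corrections are exactly the $\partial\Gamma\cdot\nabla^2 u$ and $\partial^2\Gamma\cdot\nabla u$ terms mentioned above. The term $\mathcal{T}(W(du))_{ij}=\mathcal{T}(u_k\eta^k_{ij})$ contributes $\eta^k_{ij}\nabla_k(\mathcal{T}u)$ plus $(\mathcal{T}\eta^k_{ij})u_k$; the first of these combines with the $F^{ij}\eta^k_{ij}\nabla_k(\mathcal{T}u)$ piece of $\mathcal{L}$ to reconstruct (up to curvature corrections) $\mathcal{L}(\mathcal{T}u)$, which is the whole reason $\mathcal{L}$ has that lower-order term. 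Then I would subtract $\mathcal{L}(\mathcal{T}\varphi)$, which is harmless since $\varphi\in C^{3}(\partial M)$ and $\mathcal{L}(\mathcal{T}\varphi)$ is bounded by $C\sum f_i$ (one $F^{ij}$ sum) times $|\varphi|_{C^3}$.

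The bookkeeping then reduces to classifying the surviving terms by homogeneity in $F^{ij}$ and in the Hessian of $u$: terms with no $F^{ij}$ and involving only $\psi,\varphi,g$ and their derivatives are $O(1)$; terms of the form $F^{ij}\times(\text{bounded})$ give $C\sum_i f_i$; terms of the form $F^{ij}\times\nabla_k u$ give $C(1+\sup_M|\nabla u|)\sum_i f_i$; and terms of the form $F^{ij}\nabla_{kl}u$ — which arise from the commutators $\Gamma\cdot\nabla^2 u$ — are bounded by $C\sum_i f_i|\lambda_i|$ after diagonalizing $\mathfrak{g}[u]$ at the point in question (so that $F^{ij}$ is diagonal with entries $f_i$ and $\nabla_{ii}u$ is comparable to $\lambda_i$ up to the bounded tensors $\chi,W$). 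Assembling these four types gives exactly the right-hand side of \eqref{pili1}. One subtle point I would flag: to make the $\nabla_{kl}u$-type terms honestly $\sum f_i|\lambda_i|$ rather than $\sum_{i,j}|F^{ij}||\nabla_{ij}u|$, one must work in coordinates where $\mathfrak{g}[u]$ is diagonal at $x_0$ and check that the off-diagonal Hessian entries that remain are either multiplied by $\partial\Gamma$ (hence $O(|\nabla^2 u|_g)$, absorbed) or already accounted for — this is where the Christoffel-symbol structure and the choice of geodesic-type normal coordinates, together with $g_{ij}(x_0)=\delta_{ij}$, must be used carefully.

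The main obstacle, as usual in this kind of estimate, is not any single step but controlling the $F^{ij}\nabla_{kl}u$ commutator terms: a priori $\sum_{ij}|F^{ij}|\,|\nabla^2 u|$ could be as large as $(\sum f_i)\cdot\sup|\nabla^2 u|$, which is useless since $\sup_M|\nabla^2 u|$ is what we are ultimately trying to bound. The resolution — and the technical heart of the lemma — is that these terms carry an explicit Christoffel factor that vanishes at $x_0$ in normal coordinates (or is bounded by $C|x|$), so that on the thin strip $\Omega_\delta$ they contribute $C\delta\sum f_i|\lambda_i|$, or alternatively that after diagonalization only the diagonal combination $\sum f_i|\lambda_i|$ survives with a bounded coefficient; keeping the constant independent of $(\delta_{\psi,f})^{-1}$ forces one to track that nothing here involves inverting $f$ or its gradient near $\partial\Gamma$, which it does not, since only $f_i=\partial f/\partial\lambda_i$ and the bound $\sum f_i|\lambda_i|$ appear.
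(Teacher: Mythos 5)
Your proposal is correct and follows the same route as the paper, which gives essentially no detail here beyond stating the covariantly differentiated equation and invoking the commutation formulas \eqref{direct-compute1}--\eqref{direct-compute1*}; you have simply filled in the bookkeeping. One small clarification: you do not actually need the Christoffel factors to be small on $\Omega_\delta$ (that alternative is a red herring) — boundedness is enough, because the estimate $|F^{ij}B^l_i\nabla_{lj}u|\le C\bigl(\sum_i f_i|\lambda_i| + (1+\sup_M|\nabla u|)\sum_i f_i\bigr)$ for any bounded tensor $B$ already follows pointwise from the simultaneous diagonalizability of $F^{ij}$ and $\mathfrak{g}[u]$ together with $\nabla^2 u=\mathfrak{g}[u]-\chi-W(du)$, which is the mechanism you correctly identify in your second alternative.
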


\begin{proof}
Differentiating equation \eqref{mainequ20172019} one has
\begin{equation}
\begin{aligned}
F^{ij}(\nabla_{ij k}u+\chi_{ij;k}+{\eta_{ij}^l}_{;k} \nabla_l u+\eta_{ij}^l\nabla_{lk}u)=\nabla_k \psi. \nonumber
\end{aligned}
\end{equation}
Combining with \eqref{direct-compute1} and \eqref{direct-compute1*} one derives \eqref{pili1}.
\end{proof}

\noindent
\textit{Construction of barriers and completion of proof of Proposition \ref{mix-prop1}}.
 The proposition can be proved by constructing barrier functions similar to that used in \cite{yuan2017,yuan2019,yuan2019Kahlercone} in complex variables, and
 the construction of this type of  barriers 
goes back at least to \cite{Hoffman1992Boundary,Guan1993Boundary,Guan1998The}.
Let's take
\begin{equation}
\label{ggg}
\begin{aligned}
\widetilde{\Psi}=
A_{1} \,&\sqrt{b_{1}}(\underline{u}-u)-A_{2}\sqrt{b_{1}}\rho^{2}+A_{3}\sqrt{b_{1}}(N\sigma^{2}-t\sigma)
+\frac{1}{\sqrt{b_1}}\sum_{\tau<n}|\nabla_\tau(u-\varphi)|^2+ \mathcal{T}(u-\varphi), \nonumber
\end{aligned}
\end{equation}
where $b_{1}=1+\sup_{  M} |\nabla (u-\varphi)|^{2}$.


Let $\delta>0$ and $t>0$ be sufficiently small such that $N\delta-t\leq 0$
 (where $N$ is a positive constant sufficiently large to be determined later), $\sigma$ is $C^2$ 
 and
\begin{equation}
\label{bdy1}
\begin{aligned}
 \frac{1}{2} \leq |\nabla \sigma|\leq 2,  \mbox{  }
  |\mathcal{L}\sigma | \leq   C_2\sum_{i=1}^n f_i,     \mbox{  }
  |\mathcal{L}\rho^2| \leq C_2\sum_{i=1}^n f_{i}, \mbox{ in } \Omega_{\delta}.
\end{aligned}
\end{equation}
Furthermore, we  can choose $\delta$ and $t$  small enough  such that $|2N\delta-t|$ and $t$ are both small.

By straightforward calculation and 
$|a-b|^2\geq  \frac{1}{2}|a|^2- |b|^2$, one has
\begin{equation}
\label{bdygood1}
\begin{aligned}
\mathcal{L} (\sum_{\tau<n}|\nabla_{\tau} (u-\varphi)|^2  )
\geq
\,&
 \sum_{\tau<n}F^{ij} \mathfrak{g}_{\tau i} \mathfrak{g}_{\tau  j}
   -C_1'\sqrt{b_1} \sum_{i=1}^n  f_{i}|\lambda_{i}| 
   -C_1' b_1\sum_{i=1}^n  f_{i} -C_1' \sqrt{b_{1}}.
\end{aligned}
\end{equation}

By Proposition 2.19 in  \cite{Guan12a}, there is an index $r$ such that
\begin{equation}
\label{beeee}
\begin{aligned}
\sum_{\tau<n} F^{ij}\mathfrak{g}_{\tau i}\mathfrak{g}_{\tau j}\geq
  \frac{1}{2}\sum_{i\neq r} f_{i}\lambda_{i}^{2}. \nonumber
\end{aligned}
\end{equation}
By \eqref{bdy1}, \eqref{bdygood1} and Lemma \ref{DN}, we therefore arrive at the following key inequality
\begin{equation}
\label{bdycrucial}
\begin{aligned}
\mathcal{L}(\widetilde{\Psi}) \geq \,& A_1 \sqrt{b_1} \mathcal{L}(\underline{u}-u)
+\frac{1}{2\sqrt{b_1}} \sum_{i\neq r} f_{i}\lambda_{i}^{2}
+A_3 \sqrt{b_1}\mathcal{L}(N\sigma^{2}-t\sigma)
\\
\,&
 -C_1
   -C_1 \sum_{i=1}^n  f_{i}|\lambda_{i}|
-\left(A_2C_2 +C_1 \right) \sqrt{b_1} \sum_{i=1}^n  f_{i}. 
\end{aligned}
\end{equation}

Same as the discussion presented in proof of Proposition 4.7 of \cite{yuan2019} and Proposition 4.2 in \cite{yuan2017} as well, we can prove
\begin{equation}
\label{bdy001good}
\begin{aligned}
\mathcal{L}\widetilde{\Psi}
 \geq 0,   \mbox{ in } \Omega_{\delta}  \nonumber
\end{aligned}
\end{equation}
for $0<\delta\ll1$, if we appropriately choose $A_1\gg A_2\gg A_3>1$, $N\gg1$ and $0<t\ll1$.

The boundary is locally given by $x_n=0$ in local coordinates \eqref{coordinate1},
we know that   $\mathcal{T}(u-\varphi)=0$ and $\nabla_\tau(u-\varphi)=0$  on $\partial   M\cap \overline{\Omega}_{\delta}$,
 $\forall 1\leq \tau<n$. 
Thus 
\begin{equation}
\label{yuan-rr1}
\begin{aligned}
\widetilde{\Psi}= \,&A_{1}\sqrt{b_{1}}(\underline{u}-u)-A_{2}\sqrt{b_{1}}\rho^{2}+A_{3}\sqrt{b_{1}}(N\sigma^{2}-t\sigma)
\\ \,&
+\frac{1}{\sqrt{b_1}}\sum_{\tau<n}|\nabla_\tau(u-\varphi)|^2+ \mathcal{T}(u-\varphi)
\leq  0, \mbox{ on } \partial   M\cap  \overline{\Omega}_{\delta}.  \nonumber
\end{aligned}
\end{equation}
On the other hand, $\rho=\delta$ and $\underline{u}-u\leq 0$ on $M\cap \partial\Omega_{\delta}$.
 Hence, if  $A_2\gg 1$ then
 $\widetilde{\Psi}\leq 0$
   on  $M\cap \partial\Omega_{\delta}$,
 where we  use again $N\delta-t\leq 0$.
Therefore $\widetilde{\Psi}\leq 0$ in $\Omega_{\delta}$ by applying maximum principle.
Together with $\widetilde{\Psi}(0)=0$, one has
$\nabla_\nu\widetilde{\Psi} (0)\leq 0$.
Thus
\begin{equation}
\begin{aligned}
  \nabla_{\nu} \mathcal{T}(u-\varphi)(0) \leq \,& - A_1\sqrt{b_1}\nabla_\nu(\underline{u}-u)(0) +A_2 \sqrt{b_1}\nabla_\nu(\rho^2)(0) \\\,&
  - A_3\sqrt{b_1}\nabla_\nu(N\sigma^2-t\sigma)(0)
  \\ \,&
  -\frac{2}{\sqrt{b_1}}\sum_{\tau<n}  (\nabla_\nu(u-\varphi)\nabla_\nu(\nabla_{\tau}(u-\varphi)))(0)     \\
 \leq \,& C(1+\sup_{\partial M}|\nabla (u-\underline{u})|)(1+\sup_M |\nabla u|)
 \leq C' (1+\sup_M |\nabla u|).  \nonumber
\end{aligned}
\end{equation}
Here we use \eqref{c0-bdr-c1}.
Therefore
\begin{equation}
\label{basa}
\begin{aligned}
\nabla_{\mathcal{T}\nu}u 
=\pm\nabla_{\alpha n}u\leq C (1+\sup_{M}|\nabla u|), \mbox{ at } x_0,
\end{aligned}
\end{equation}
where $C$  depends only on  
 $|\varphi|_{C^{3}(M)}$,
$|\underline{u}|_{C^{2}(M)}$
 $|\psi|_{C^{1}(M)}$, $\partial M$
up to its third derivatives
 and other known data
 (but not on $\sup_{M}|\nabla u|$).
Moreover,   the constant $C$ in \eqref{basa}  does not
 depend on $(\delta_{\psi,f})^{-1}$.




\subsection{Completion of the proof of Theorem \ref{mainthm1-de}}

In analogy with the Theorem 4.9 proved in \cite{yuan2019}, 
if there is an admissible subsolution $\underline{u}\in C^2(\bar M)$, then
for any admissible  solution $u\in C^4(M)\cap C^2(\bar M)$ to Dirichlet problem \eqref{mainequ20172019} and \eqref{mainequ1},
there  is a uniformly positive constant $C$ depending
only on $|u|_{C^{0}(\bar M)}$,
$|\underline{u}|_{C^{2}(\bar M)}$, $|\chi|_{C^{2}(\bar M)}$,
$|\psi|_{C^2(\bar M)}$ and other known data (but not on $(\delta_{\psi,f})^{-1}$) such that
\begin{equation}
\label{2-seglobal}
\begin{aligned}
\sup_{ M}\Delta u
\leq
C (1+ \sup_{M}|\nabla u|^{2} +\sup_{\partial   M}|\Delta u|).
\end{aligned}
\end{equation}
Comparing to 
that of equation \eqref{mainequ-Kahler} involving gradient terms 
in complex variables studied in \cite{yuan2019},
  the proof of 
 \eqref{2-seglobal}  
  is much more simple
  in 
  real variables. We hence only summarize it but omit the details. 
  The estimation \eqref{2-seglobal} for $\eta=0$ in real variables was mentioned in Section 8 of \cite{Gabor} where Sz\'ekelyhidi assumes the existence of
$\mathcal{C}$-subsolution, 
and was also proved in  
the Theorem 1.6 of 
 \cite{Guan14} for Dirichlet problem for more general equations with removing \eqref{addistruc}.
  With \eqref{good-quard}, 
   \eqref{2-seglobal} and  $\mathrm{tr}_g (U[u])>0$ at hand, we establish
   \begin{equation}
\label{mainestimate2}
\begin{aligned}
\sup_M\Delta u\leq C(1+\sup_M|\nabla u|^2)  
\end{aligned}
\end{equation}
and then apply it to prove gradient bound by using a blow-up argument proposed by \cite{Chen} and \cite{HouMaWu2010,Dinew2017Kolo} for
complex Monge-Amp\`ere equation and complex $k$-Hessian equation respectively, and further by
Sz\'{e}kelyhidi \cite{Gabor} for general complex fully nonlinear elliptic equations.    
We also refer the reader
to Section 8 of \cite{Gabor} for brief discussion on  Liouville type theorem and blow-up argument in real case.
Evans-Krylov theorem  \cite{Evans82,Krylov83} and Schauder theory give $C^{2,\alpha}$ and higher order regularity.

   Indeed, 
  Guan \cite{Guan12a}  and Guan-Jiao  \cite{Guan2015Jiao} 
   proved a second
estimate  
for a class of fully nonlinear elliptic
equations  possibly with gradient terms
on Riemannian manifolds, however,
the bound does not tell one how it precisely depends on the gradient bound which is not enough to apply blow-up argument.
Our quantitative boundary estimate enables us to 
to achieve this goal.
Also, for the proof of gradient estimate
 for Hessian type equations ($\eta=0$)
 on curved Riemannian manifolds,  without using second  estimate,  please refer for instance to 
\cite{LiYY1990,Urbas2002,Guan12a} and 
progress made by Guan  (see Theorem 1.6 in current version of  \cite{Guan14}).

 It would be worthwhile to note that, besides with giving this  interesting and different approach to gradient estimate,
what the major role of our quantitative boundary estimate  \eqref{good-quard}  plays 
is allowing one to deal with degenerate equations and to impose some regularity assumptions on boundary and boundary data as well,
since \textit{a priori} estimates up to second order among others boundary estimate for second derivatives are all
 independent of $(\delta_{\psi,f})^{-1}$. We thus apply approximation 
  to study degenerate equations
  and finally derive Theorem \ref{mainthm1-de} as a complement of the following theorem.
   (See the final section of \cite{yuan2017} for $\eta=0$ and Remark 7.5 in first version of \cite{yuan2019} for the announcement on general $\eta$).


 \begin{theorem}
[\cite{yuan2017,yuan2019}]
\label{mainthm1-20172019}
Let $(M,g)$ be a compact Riemannian manifold with smooth \textit{concave} boundary.
Let $\eta_{ij}^k=\delta_{ik}\zeta_j+\delta_{jk}\zeta_i$.
In addition to \eqref{elliptic}, \eqref{concave}, \eqref{addistruc},  $\varphi\in C^{2,1}(\partial M)$,
 $\psi\in C^{1,1}(\bar M)$,   $f\in C^\infty(\Gamma)\cap C(\overline{\Gamma})$ and $\inf_M\psi=\sup_{\partial \Gamma}f$, we further
assume that there is a strictly admissible subsolution $\underline{u}\in C^{2,1}(\bar M)$ 
   with $f(\lambda(\mathfrak{g}[\underline{u}])) \geq  \psi +\delta_0  \mbox{ in } \bar M, \mbox{   } \underline{u}|_{\partial M}= \varphi.$
   Then 
   \eqref{mainequ20172019}
admits a $C^{1,1}$ (weak) solution $u$ 
with $u|_{\partial M}=\varphi$, $\lambda(\mathfrak{g}[u])\in \overline{\Gamma}$
   and $\Delta u\in L^\infty(\bar M)$.

\end{theorem}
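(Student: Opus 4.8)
The plan is to produce the $C^{1,1}$ weak solution by approximation from the non-degenerate case, the way degenerate equations are treated in \cite{yuan2017,yuan2019}. Mollify the data to smooth $\varphi_\varepsilon\to\varphi$ in $C^{2,\alpha}$ and $\psi_\varepsilon\to\psi$ in $C^{1,\alpha}$ (with the $C^{2,1}$, resp. $C^{1,1}$, bounds preserved), adjust the subsolution to $\underline{u}_\varepsilon$ carrying boundary value $\varphi_\varepsilon$, and set $\psi^\varepsilon:=\psi_\varepsilon+\varepsilon$; since $\underline{u}$ has the strict gap $\delta_0$, for $\varepsilon$ small $\underline{u}_\varepsilon$ is still an admissible subsolution with $f(\lambda(\mathfrak g[\underline{u}_\varepsilon]))\ge\psi^\varepsilon$, while now $\delta_{\psi^\varepsilon,f}=\inf_M\psi^\varepsilon-\sup_{\partial\Gamma}f>0$. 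Then: (i) solve, for each such $\varepsilon$, the non-degenerate Dirichlet problem $f(\lambda(\mathfrak g[u^\varepsilon]))=\psi^\varepsilon$ in $M$, $u^\varepsilon|_{\partial M}=\varphi_\varepsilon$, for a smooth admissible $u^\varepsilon$; (ii) bound $\|u^\varepsilon\|_{C^{1,1}(\bar M)}$ independently of $\varepsilon$, equivalently of $(\delta_{\psi^\varepsilon,f})^{-1}$; and (iii) let $\varepsilon\to0$.

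For step (ii) I would run the estimates of Section \ref{Sketchtheproof}. Comparison with $\underline{u}_\varepsilon$ from below and with the supersolution $w$ (solving $\mathrm{tr}_g(\mathfrak g[w])=0$, $w|_{\partial M}=\varphi_\varepsilon$) from above gives $\sup_M|u^\varepsilon|\le C$; since on $\partial M$ the tangential derivatives of $u^\varepsilon$ equal those of $\underline{u}_\varepsilon$ and its normal derivative is trapped between those of $\underline{u}_\varepsilon$ and $w$, this also yields $\sup_{\partial M}|\nabla u^\varepsilon|\le C$ and the pure tangential Hessian on $\partial M$. Recalling from Section \ref{Sketchtheproof} that \eqref{mainequ20172019} is just \eqref{mainequ0} in disguise, and that a \emph{concave} boundary is a fortiori \emph{mean concave} while $\eta^k_{ij}=\delta_{ik}\zeta_j+\delta_{jk}\zeta_i$ obeys the technical hypothesis, Theorem \ref{quan-boundaryestimate-thm1} (via Propositions \ref{proposition-quar-yuan1} and \ref{mix-prop1}) supplies the key bound $\sup_{\partial M}\Delta u^\varepsilon\le C(1+\sup_M|\nabla u^\varepsilon|^2)$ with $C$ \emph{not} depending on $(\delta_{\psi^\varepsilon,f})^{-1}$. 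Feeding this into the global second order estimate \eqref{2-seglobal} (a maximum-principle argument using the concavity \eqref{concave}, in the spirit of \cite{Gabor,Guan14,Guan12a}) gives $\sup_M\Delta u^\varepsilon\le C(1+\sup_M|\nabla u^\varepsilon|^2)$, still uniformly in $\varepsilon$; and with this quadratic-in-gradient bound the interior gradient estimate $\sup_M|\nabla u^\varepsilon|\le C$ follows from the blow-up argument of \cite{Chen,HouMaWu2010,Dinew2017Kolo,Gabor} --- a gradient-blowing-up subsequence rescales to an entire admissible solution of a limiting equation on $\mathbb{R}^n$, which the structural hypothesis \eqref{addistruc} excludes by a Liouville-type theorem. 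Hence $\|u^\varepsilon\|_{C^{1,1}(\bar M)}\le C$ uniformly; for fixed $\varepsilon$, Evans--Krylov \cite{Evans82,Krylov83} and Schauder theory then upgrade $u^\varepsilon$ to smooth.

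Step (i) I would carry out for each fixed $\varepsilon$ by the continuity method (or Leray--Schauder degree) in the space of admissible $C^{2,\alpha}(\bar M)$ functions with boundary value $\varphi_\varepsilon$: \eqref{elliptic} makes the linearization elliptic, the path of equations is started at $\underline{u}_\varepsilon$, openness and convexity of $\Gamma$ preserve admissibility along the path, and the uniform \emph{a priori} estimates above keep the solution set closed and bounded, so the set of solvable parameters is both open and closed.

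For step (iii), Arzel\`a--Ascoli applied to the uniform bound $\|u^\varepsilon\|_{C^{1,1}(\bar M)}\le C$ extracts a subsequence converging in $C^{1,\beta}(\bar M)$ for every $\beta<1$ to some $u\in C^{1,1}(\bar M)$ with $\Delta u\in L^\infty(\bar M)$ and $u|_{\partial M}=\varphi$; stability of viscosity solutions under this uniform convergence (together with $\psi^\varepsilon\to\psi$) shows $u$ is a viscosity solution of $f(\lambda(\mathfrak g[u]))=\psi$, hence, being $C^{1,1}$, a solution almost everywhere, while $\lambda(\mathfrak g[u^\varepsilon])\in\Gamma$ and the uniform bound on $\mathfrak g[u^\varepsilon]$ force $\lambda(\mathfrak g[u])\in\overline\Gamma$ a.e. The step I expect to be the main obstacle is precisely the one that makes passing to the limit possible: the independence of the second order boundary estimate from $(\delta_{\psi^\varepsilon,f})^{-1}$. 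For a general boundary the usual double-normal estimate on $\partial M$ blows up like $(\delta_{\psi^\varepsilon,f})^{-1}$, and it is the quantitative boundary estimate built on the concavity of $\partial M$ (Propositions \ref{proposition-quar-yuan1} and \ref{mix-prop1}), together with the fact that the blow-up gradient estimate inherits the same parameter-independence, that rescues the argument.
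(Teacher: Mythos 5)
Your overall architecture --- approximate by strictly non-degenerate problems $f(\lambda(\mathfrak g[u^\varepsilon]))=\psi^\varepsilon$, run the $C^0$, boundary-gradient and pure-tangential-Hessian comparisons, get a quantitative boundary estimate that is uniform in $(\delta_{\psi^\varepsilon,f})^{-1}$, feed it into the global estimate \eqref{2-seglobal}, close the gradient bound by blow-up and Liouville, apply Evans--Krylov, solve by continuity, and pass to a $C^{1,1}$ limit by Arzel\`a--Ascoli with viscosity stability --- is the paper's methodology and is the right plan.

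There is, however, a genuine gap at the pivotal boundary step: you invoke Theorem \ref{quan-boundaryestimate-thm1} on the grounds that ``\eqref{mainequ20172019} is just \eqref{mainequ0} in disguise, and concave implies mean concave.'' You have the reduction backwards. Section \ref{Sketchtheproof} shows that \eqref{mainequ0} is a \emph{special case} of \eqref{mainequ20172019}: one passes from $f$ on $\Gamma$ to $\tilde f=f\circ Q$ on $\widetilde\Gamma$, $Q=J-I$, and this direction preserves the structural conditions since $\tilde f_i=\sum_{k\neq i}f_k>0$. To go the other way, i.e.\ to recast \eqref{mainequ20172019} in the $U$-form \eqref{mainequ0}, you would need $\hat f=f\circ Q^{-1}$, $Q^{-1}=\tfrac{1}{n-1}J-I$, whose derivatives $\hat f_j=\tfrac{1}{n-1}\sum_i f_i-f_j$ are not in general positive (for instance $f=\det$, $\Gamma=\Gamma_n$, $\lambda=(t^{-1},1,\dots,1)$, $t\gg1$, gives $\hat f_1<0$ when $n\geq3$). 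So Theorem \ref{quan-boundaryestimate-thm1} and Proposition \ref{proposition-quar-yuan1} do \emph{not} directly apply to \eqref{mainequ20172019}, and mean concavity alone would not deliver the boundary estimate for this equation --- which is precisely why Theorem \ref{mainthm1-20172019} is stated under \emph{concavity} while Theorem \ref{mainthm1-de} only needs mean concavity.

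What actually carries the argument is the analog of Proposition \ref{proposition-quar-yuan1} for the $\mathfrak g$-matrix itself, as in \cite{yuan2017,yuan2019} and as noted after Proposition \ref{proposition-quar-yuan1}. There the comparison matrix carries $\mathfrak g_{\alpha\alpha}$ versus $\underline{\mathfrak g}_{\alpha\alpha}$ on the tangential diagonal (the normal entry is the large parameter $R$), so one needs the full pointwise tangential inequality $\mathfrak g_{\alpha\alpha}\geq\underline{\mathfrak g}_{\alpha\alpha}$ for each $\alpha$, not just the trace. By \eqref{formular3}, $\mathfrak g_{\alpha\alpha}-\underline{\mathfrak g}_{\alpha\alpha}=-\nabla_\nu(u-\underline u)\,\mathrm{II}_{\partial M}(\mathrm e_\alpha,\mathrm e_\alpha)$ with $\nabla_\nu(u-\underline u)\geq0$, so this requires $\mathrm{II}_{\partial M}\leq0$ in every tangential direction, i.e.\ genuine concavity of $\partial M$. (For $U[u]$ only $\sum_\alpha\mathfrak g_{\alpha\alpha}\geq\sum_\alpha\underline{\mathfrak g}_{\alpha\alpha}$ is used, whence $\mathrm H_{\partial M}\leq0$ suffices.) Replacing your citation to Theorem \ref{quan-boundaryestimate-thm1} by the boundary estimate from \cite{yuan2017,yuan2019} for \eqref{mainequ20172019} under the concavity hypothesis, and keeping the rest of your plan, gives the intended proof.
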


\section{The Dirichlet problem for degenerate equations on certain K\"ahler manifolds}
\label{Kahlercase}

By assuming that $(M,\omega)$ is a compact K\"ahler manifold 
with nonnegative orthogonal bisectional curvature and the existence of subsolutions,
the author proved 
\eqref{2-seglobal} and gradient estimate in 
  \cite{yuan2019} (the first two versions), and solved 
\begin{equation}
\label{mainequ-Kahler}
\begin{aligned}
f(\lambda(\mathfrak{g}[u]))= \psi, 
\mbox{   }\mathfrak{g}[u] =\chi+\sqrt{-1}\partial \overline{\partial} u+\sqrt{-1} \partial u\wedge  \overline{\eta^{1,0}}+\sqrt{-1}  \eta^{1,0} \wedge  \overline{\partial} u,
\end{aligned}
\end{equation}
with possibly degenerate right hand side when $\partial M$ is \textit{pseudoconcave} of
 $\mathcal{L}_{\partial M}\leq 0$. Here  ${\chi}$ is a smooth real $(1,1)$-form and
   $\eta^{1,0} =\eta_{i}d z^i$  is a smooth $(1,0)$-form.

In this section, with 
replaced $\mathcal{L}_{\partial M}\leq 0$ by $\mathrm{tr}_{\omega'}(\mathcal{L}_{\partial M})\leq 0$ (for simplicity, as in \cite{yuan2019}, we call it mean pseudoconcave) where
 $\omega'=\omega|_{T_{\partial M}\cap JT_{\partial M}}$,
we study the following
equation possibly with degenerate right hand side 
\begin{equation}
\label{mainequ0-Kahler}
\begin{aligned}
 f(\lambda(W[u])) =\psi \mbox{ in } M,
\end{aligned}
\end{equation}
where $W[u]=\tilde{\chi}+ (\Delta u)\omega-\sqrt{-1}\partial\overline{\partial} u +Z(\partial u,\overline{\partial}u)$,
$\tilde{\chi}$ is a smooth real $(1,1)$-form,
$Z(\partial u,\overline{\partial}u)=\mathrm{tr}_\omega (\sqrt{-1} \partial u\wedge  \overline{\eta^{1,0}}+\sqrt{-1}  \eta^{1,0} \wedge  \overline{\partial} u)\omega-(\sqrt{-1} \partial u\wedge  \overline{\eta^{1,0}}
+\sqrt{-1}  \eta^{1,0} \wedge  \overline{\partial} u)$.

An interesting equation of this type
   is a Monge-Amp\`ere equation for $(n-1)$-plurisubharmonic functions 
 that is connected to the Gauduchon conjecture on closed astheno-K\"ahler manifolds first studied by Jost-Yau \cite{Jost1993Yau} (i.e. $\partial\overline{\partial}(\omega^{n-2})=0$, and complex surfaces are all astheno-K\"ahler).
 On such manifolds the Gauduchon conjecture was proved
 by Cherrier \cite{Cherrier} for $n=2$
 and later by  Tosatti-Weinkove \cite{Tosatti2013Weinkove} for higher dimensions, 
 while for the Gauduchon conjecture on arbitrary closed Hermitian manifolds without carrying astheno-K\"ahler metric,
 the corresponding equation 
 is much more 
 hard to handle. 
 For more references on Form-type Calabi-Yau equation and Monge-Amp\`ere equation for $(n-1)$-PSH functions, please refer to
 \cite{FuWangWuFormtype2010,FuWangWuFormtype2015,Tosatti2017Weinkove,GTW15,Popovici2015,guan-nie},
 and also to 
  \cite{yuan2019} for  Dirichlet problem on compact Hermitian manifolds with holomorphic flat boundary.

 The following theorem concludes the existence of weak solutions to Dirichlet problems
  of degenerate equation \eqref{mainequ0-Kahler} on certain K\"ahler manifolds. 

\begin{theorem}
Let $(M,\omega)$ be a compact K\"ahler manifold 
with nonnegative orthogonal bisectional curvature and with smooth boundary of $\mathrm{tr}_{\omega'}(\mathcal{L}_{\partial M})\leq 0$.
 In addition to \eqref{elliptic}, \eqref{concave} and \eqref{addistruc}, we
assume $\varphi\in C^{2,1}(\partial M)$,  $\psi\in C^{1,1}(\bar M)$,
 $f\in C^\infty(\Gamma)\cap C(\overline{\Gamma})$ and $\inf_M\psi=\sup_{\partial \Gamma}f$.
Then equation \eqref{mainequ0-Kahler} supposes a  (weak)
   solution $u\in C^{1,\alpha}(\bar M)$ $(\forall 0<\alpha<1)$ with $u|_{\partial M}=\varphi$, $\lambda(W[u])\in \overline{\Gamma}$
   and $\Delta u\in L^\infty(\bar M)$, provided that
there is a strictly admissible subsolution $\underline{u}\in C^{2,1}(\bar M)$.
Moreover, if $\mathrm{tr}_{\omega'}(\mathcal{L}_{\partial M})< 0$ then the above statement is still
true when we assume $\partial M\in C^{2,1}$.
\end{theorem}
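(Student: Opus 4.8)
The plan is to run the approximation scheme of Theorem \ref{mainthm1-de} in the K\"ahler setting. First I would recast \eqref{mainequ0-Kahler} in the form \eqref{mainequ-Kahler}: applying the linear transformation $P'$ of Subsection \ref{Sketchtheproof} with $\mathfrak{g}[u]=\chi+\sqrt{-1}\partial\overline{\partial}u+\sqrt{-1}\partial u\wedge\overline{\eta^{1,0}}+\sqrt{-1}\eta^{1,0}\wedge\overline{\partial}u$ and $\chi=\frac{1}{n-1}(\mathrm{tr}_\omega\tilde\chi)\omega-\tilde\chi$, one has $W[u]=(\mathrm{tr}_\omega\mathfrak{g}[u])\omega-\mathfrak{g}[u]$, hence $\tilde f(\lambda(\mathfrak{g}[u]))=\psi$, where $\tilde f$ on $\widetilde{\Gamma}=P'(\Gamma)$ still satisfies \eqref{elliptic}, \eqref{concave}, \eqref{addistruc} and $\sup_{\partial\widetilde{\Gamma}}\tilde f=\sup_{\partial\Gamma}f$. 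Then, for $0<\epsilon<\delta_0$, I would solve the non-degenerate problems with right-hand side $\psi+\epsilon$, so that $\delta_{\psi+\epsilon,f}=\epsilon>0$ while the given strictly admissible subsolution $\underline u$ remains an admissible subsolution for $\psi+\epsilon$ with the same boundary data.

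The core of the argument is a uniform \textit{a priori} estimate for the approximating solutions $u_\epsilon$, with all constants independent of $\epsilon^{-1}$. The $C^0$ bound and the boundary gradient bound come from the subsolution/supersolution comparison exactly as in \eqref{c0-bdr-c1}. For the second-order boundary estimate I would establish the complex counterpart of Theorem \ref{quan-boundaryestimate-thm1}, namely $\sup_{\partial M}\Delta u_\epsilon\le C(1+\sup_M|\nabla u_\epsilon|^2)$ with $C$ not depending on $\epsilon^{-1}$; following Propositions \ref{proposition-quar-yuan1} and \ref{mix-prop1} this splits into (i) the complex analog of \eqref{yuan-prop-dual}, obtained by applying the Hermitian version of Lemma \ref{yuan'slemma2} (cf. \cite{yuan2017,yuan2019}) to the relevant matrix, together with the inequality $\sum_{\alpha<n}\mathfrak{g}_{\alpha\overline{\alpha}}\ge\sum_{\alpha<n}\underline{\mathfrak{g}}_{\alpha\overline{\alpha}}$ along $\partial M$ (this inequality is exactly where the hypothesis $\mathrm{tr}_{\omega'}(\mathcal{L}_{\partial M})\le 0$ enters), and (ii) the linear-in-gradient estimate $|\nabla^2u(X,\nu)|\le C(1+\sup_M|\nabla u|)$ for unit $X\in T_{\partial M}$, proved by the barrier construction of Proposition \ref{mix-prop1}, which requires no restriction on the boundary. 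When $\mathrm{tr}_{\omega'}(\mathcal{L}_{\partial M})<0$, the strict sign in (i) leaves enough room to carry everything out assuming only $\partial M,\varphi\in C^{2,1}$.

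Next, by \cite{yuan2019} the nonnegative orthogonal bisectional curvature hypothesis yields the global bound \eqref{2-seglobal}, which together with the boundary estimate and $\mathrm{tr}_\omega(W[u])>0$ gives $\sup_M\Delta u_\epsilon\le C(1+\sup_M|\nabla u_\epsilon|^2)$ with $C$ independent of $\epsilon^{-1}$; the gradient estimate $\sup_M|\nabla u_\epsilon|\le C$ then follows from the blow-up argument of \cite{Chen,HouMaWu2010,Dinew2017Kolo,Gabor} and a Liouville-type theorem, and Evans--Krylov \cite{Evans82,Krylov83} together with Schauder theory give higher regularity, so each non-degenerate problem is solvable by the continuity method, and $\Delta u_\epsilon\in L^\infty(\bar M)$ together with the uniform $C^1$ bound gives, via elliptic estimates for the Laplacian and Sobolev embedding, a uniform bound for $\{u_\epsilon\}$ in $W^{2,p}(\bar M)$ for every $p<\infty$. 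Letting $\epsilon\to0$ and passing to a subsequence, $u_\epsilon\to u$ in $C^{1,\alpha}(\bar M)$ for every $0<\alpha<1$, with $\Delta u\in L^\infty(\bar M)$, $\lambda(W[u])\in\overline{\Gamma}$, $u|_{\partial M}=\varphi$, and $f(\lambda(W[u]))=\psi$ in the weak (a.e.) sense; this is the asserted solution. The main obstacle I anticipate is step (i): deriving the tangential-trace monotonicity $\sum_{\alpha<n}\mathfrak{g}_{\alpha\overline{\alpha}}\ge\sum_{\alpha<n}\underline{\mathfrak{g}}_{\alpha\overline{\alpha}}$ on $\partial M$ from merely the trace condition $\mathrm{tr}_{\omega'}(\mathcal{L}_{\partial M})\le 0$ (rather than $\mathcal{L}_{\partial M}\le 0$ as in \cite{yuan2019}), and making the whole boundary estimate go through with only $C^{2,1}$ boundary regularity in the strict case.
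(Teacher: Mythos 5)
Your proposal follows the paper's route: transform \eqref{mainequ0-Kahler} into the form \eqref{mainequ-Kahler} via the linear map $P'$, verify that $\tilde f$ inherits \eqref{elliptic}--\eqref{addistruc} on $\widetilde\Gamma$, approximate $\psi$ by $\psi+\epsilon$ to make the problem non\-degenerate, establish the quantitative boundary estimate (Hermitian analogues of Propositions \ref{proposition-quar-yuan1} and \ref{mix-prop1}), combine with the global second-order bound \eqref{2-seglobal} coming from nonnegative orthogonal bisectional curvature in \cite{yuan2019}, close the loop with the blow-up gradient estimate, Evans--Krylov and Schauder, and pass to the limit $\epsilon\to0$ in $C^{1,\alpha}\cap W^{2,p}$. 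The paper does not write out this complex theorem in detail --- it presents it as the complex counterpart of Theorem \ref{mainthm1-de} --- and your reconstruction of that implicit argument is faithful.

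One remark on what you flag as the ``main obstacle.'' The tangential-trace monotonicity $\sum_{\alpha<n}\mathfrak{g}_{\alpha\overline\alpha}\ge\sum_{\alpha<n}\underline{\mathfrak{g}}_{\alpha\overline\alpha}$ is in fact not an obstacle at all: it is exactly the mechanism that makes the \emph{trace} hypothesis sufficient, and it works precisely as in the real case. The boundary identity for $u-\underline u$ (which vanishes on $\partial M$, with $\nabla_\nu(u-\underline u)\ge0$ by the comparison principle as in \eqref{formular3}) reads, for $\mathrm{e}_\alpha\in T_{\partial M}\cap JT_{\partial M}$,
$$\bigl(\partial\overline\partial(u-\underline u)\bigr)(\mathrm{e}_\alpha,\overline{\mathrm{e}}_\beta)=-\nabla_\nu(u-\underline u)\,\mathcal{L}_{\partial M}(\mathrm{e}_\alpha,\overline{\mathrm{e}}_\beta),$$
and the gradient terms in $\mathfrak{g}[u]-\mathfrak{g}[\underline u]$ drop out on tracing since $u$ and $\underline u$ share all tangential first derivatives (the $\eta^{1,0}$ structure $u_i\bar\eta_j+\eta_i u_{\bar j}$ is already of the admissible form, so no ``technical hypothesis'' as in the real case is needed). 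Taking the $\omega'$-trace of the displayed identity yields
$$\sum_{\alpha<n}\bigl(\mathfrak{g}_{\alpha\overline\alpha}-\underline{\mathfrak{g}}_{\alpha\overline\alpha}\bigr)=-\nabla_\nu(u-\underline u)\cdot\mathrm{tr}_{\omega'}(\mathcal{L}_{\partial M})\ge0.$$
This is the direct analogue of the sentence in the proof of Proposition \ref{proposition-quar-yuan1} that invokes $\mathrm{H}_{\partial M}\le0$: the point of the whole paper is that only the traced quantity enters, so weakening $\mathcal{L}_{\partial M}\le0$ to $\mathrm{tr}_{\omega'}(\mathcal{L}_{\partial M})\le0$ costs nothing. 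The genuinely terse point --- getting away with only $C^{2,1}$ boundary/data regularity when $\mathrm{tr}_{\omega'}(\mathcal{L}_{\partial M})<0$ strictly --- is equally unexplained in the paper's real-variable Theorem \ref{proposition-quar-yuan1-thm}, so you are not missing anything the paper actually supplies; if you want to fill it in, the natural route is that the strict sign gives slack that lets the barrier argument of Proposition \ref{mix-prop1} absorb the loss of one derivative in $\varphi$ and $\partial M$.
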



\begin{appendix}

\section{Construction of subsolutions}
  \label{appendix}
  In what follows we assume $\eta=0$ and $\eta^{1,0}=0$ for simplicity.
  The existence of subsolutions is required   according to above theorems.
Inspired by an idea of \cite{yuan2019}, on certain topologically product spaces, we can construct strictly admissible subsolutions
 for some equations  \eqref{mainequ0} and more general \eqref{mainequ20172019}.

\vspace{1mm}

\noindent{\bf Real variables}:

\vspace{1mm}
  $\bullet$ {\bf Case I}: $(M,g)$ is a warped product space $(X\times (0,1), e^\varrho g_X+ dx^n\otimes dx^n)$ for $\varrho\in C^\infty(\bar M)$, ($\bar M=X\times [0,1]$).

   \begin{itemize}
   \item
  For equation \eqref{mainequ20172019}:
Suppose there is an admissible function $\underline{w}$ with $\lambda(\mathfrak{g}[\underline{w}])\in\Gamma$ such that
\begin{equation}
\begin{aligned}
\lim_{t\rightarrow+\infty}f(\lambda(\mathfrak{g}[\underline{w}]+tdx^n\otimes dx^n)) > \psi  \mbox{ in } \bar M, \mbox{   }
\underline{w}= \varphi   \mbox{ on } \partial  M  \nonumber
\end{aligned}
\end{equation}
which is automatically satisfied if $f$ further obeys the unbounded condition 
\begin{equation}\label{unbound} \begin{aligned}
\lim_{t\rightarrow+\infty}f(\lambda_1,\cdots,\lambda_n+t)=\sup_\Gamma f, \mbox{  } \forall \lambda=(\lambda_1,\cdots,\lambda_n)\in \Gamma.
\end{aligned}\end{equation}
Then  we can construct subsolutions for  \eqref{mainequ20172019} on such warped product spaces, and the subsolution is given by $\underline{u}=\underline{w}+A(x_n^2-x_n)$ for $A\gg1$.

\item For equation \eqref{mainequ0}: Similarly, on such warped product spaces, we can construct subsolutions, if
\begin{equation}
\begin{aligned}
\lim_{t\rightarrow+\infty}f(\lambda(\mathfrak{g}[\underline{w}]+tg_X)) > \psi  \mbox{ in } \bar M, \mbox{   }
\underline{w}= \varphi   \mbox{ on } \partial  M  \nonumber
\end{aligned}
\end{equation}
holds for an admissible function $\underline{w}$ with $\lambda(U[\underline{w}])\in\Gamma$. 

\end{itemize}

 $\bullet$  {\bf Case II}: $(M,g)=(X\times \Omega, g)$ is  a product of $(n-k)$-dimensional closed Riemannian manifold   $(X,g_X)$ with
  a bounded smooth domain 
  $\Omega\subset \mathbb{R}^k$, $2\leq k\leq n$.
Also, we denote $g_\Omega = \sum_{j=n-k+1}^ndx^j\otimes d x^j$ and
 $$\Gamma^\infty_{\mathbb{R}^1}=\{c\in \mathbb{R}: (t,\cdots,t,c)\in\Gamma \mbox{ for } t\gg1\}.$$
 In particular, if $c>0$ then $c\in \Gamma^\infty_{\mathbb{R}^1}$.

\begin{itemize}
\item For equation \eqref{mainequ20172019}: Suppose furthermore that $\Omega$ is a strictly convex domain.
If there is an admissible function $\underline{w}$ such that
 \begin{equation}
\begin{aligned}
\lim_{t\rightarrow+\infty}f(\mathfrak{g}[\underline{w}]+tg_\Omega)) > \psi  \mbox{ in } \bar M, \mbox{   }
\underline{w}= \varphi   \mbox{ on } \partial  M, \nonumber
\end{aligned}
\end{equation}
then
 the 
 subsolution is given by $\underline{u}=\underline{w}+Ah \mbox{ for large } A,$
 where $h$ is a smooth strictly convex function with $h|_{\partial \Omega}=0$.

\item For equation \eqref{mainequ0}:  We assume
$\mathrm{H}_{\partial \Omega}\in \Gamma^\infty_{\mathbb{R}^1}$, $g=e^\varrho g_X+ g_\Omega$, $\varrho \in C^\infty(\bar M)$, and $f$ further satisfies \eqref{addistruc}.

A somewhat interesting fact is that under such assumptions, we can construct subsolutions with arbitrary $\varphi\in C^2(\partial M)$, since there exists a function
$h\in C^\infty(\bar \Omega)$ with $h|_{\partial \Omega}=0$ such that $\lambda(\Delta  h g -\nabla^2 h)\in \Gamma$ in $\bar M$.

Note that   $\mathbb{R}^+\subseteq  \Gamma^\infty_{\mathbb{R}^1}$.
 So if $\Omega$ is a strictly mean convex domain, 
then  $\mathrm{H}_{\partial \Omega}\in \Gamma^\infty_{\mathbb{R}^1}$, and  \eqref{mainequ0} admits a smooth subsolution on such background manifolds.

 \end{itemize}

\vspace{1mm}
\noindent{\bf Complex variables}:
  $M=X\times N$
  is a product of a closed complex manifold $(X,\omega_X)$ of complex dimension  $(n-k)$ with
  a compact complex manifold $(N,\omega_N)$ of complex dimension  $k$ with boundary. 
When $N$ is a compact Riemannian surface with boundary, i.e. $k=1$, this case was already considered by the author in \cite{yuan2019}.


Suppose now that $N=\Omega\subset\mathbb{C}^k$ is a bounded smooth  domain, as above $2\leq k\leq n$.
  Let  $\omega_\Omega =\sqrt{-1}\sum_{j=n-k+1}^ndz^j\wedge d\bar z^j$ denote
the standard metric  of $\mathbb{C}^k$.

 Similar to  Riemannian case, we can construct strictly admissible subsolutions.
More precisely,

\begin{itemize}
\item For \eqref{mainequ-Kahler}:  Let  $\Omega$ be a  strictly pseudoconvex domain,
$h$ be a  smooth strictly pseudoconvex function
   with $h|_{\partial \Omega}=0$.
   The subsolution is given by $\underline{u}=\underline{w}+Ah \mbox{ for large } A,$ provided there is an admissible function $\underline{w}$ such that
  \begin{equation}
\begin{aligned}
\lim_{t\rightarrow+\infty}f(\mathfrak{g}[\underline{w}]+t\omega_\Omega)) > \psi  \mbox{ in } \bar M, \mbox{   }
\underline{w}= \varphi   \mbox{ on } \partial  M.  \nonumber
\end{aligned}
\end{equation}
\item For \eqref{mainequ0-Kahler}: We assume  $\mathrm{tr} (\mathcal{L}_{\partial \Omega})\in\Gamma^\infty_{\mathbb{R}^1}$,
$\omega=e^\varrho\omega_X+e^\rho\omega_\Omega$, for $\varrho$, $\rho\in C^\infty(\bar M)$. Similarity,  we can construct subsolutions with arbitrary $\varphi\in C^2(\partial M)$, if $f$ further satisfies \eqref{addistruc}.
\end{itemize}

\section{A remark on quantitative boundary estimate}
\label{append-B}

In this appendix we further extend the quantitative boundary estimates (Theorem \ref{quan-boundaryestimate-thm1})
from    mean concave  boundary to general boundary. 
More precisely,

 \begin{theorem}

  Let $(M,g)$ be a compact Riemannian manifold with   smooth general
   boundary.
Let $\psi\in C^{1}(\bar M)$, $\varphi\in C^{3}(\partial M)$.
Suppose, in addition to  \eqref{elliptic}, \eqref{concave}, \eqref{nondegenerate} and \eqref{existenceofsubsolution}, that $f$ satisfies \eqref{unbound-strong} and
there is an admissible supersolution  $\breve{u}$ with 
\begin{equation}
\label{supersolution1}
\begin{aligned}
F(U[\breve{u}])\leq \psi  \mbox{ and } \lambda(\mathfrak{g}[\breve{u}])\in \Gamma \mbox{ in } \bar M, \mbox{   }  \breve{u}=\varphi \mbox{ on } \partial M.
\end{aligned}
\end{equation}
Then for any admissible solution $u\in C^3(M)\cap C^2(\bar M)$ to   Dirichlet problem
 \eqref{mainequ0}-\eqref{mainequ1},  quantitative boundary estimate \eqref{good-quard} holds for
  a uniformly positive constant  that depends on $|\varphi|_{C^{3}(\bar M)}$, $|\psi|_{C^{1}(\bar M)}$,
$|\underline{u}|_{C^{2}(\bar M)}$, $|\breve{u}|_{C^2(\bar M)}$, $\partial M$
up to its third derivatives
and other known data.

 \end{theorem}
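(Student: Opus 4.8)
The plan is to re-run the proof of Proposition~\ref{proposition-quar-yuan1}, whose \emph{only} use of the mean concavity of $\partial M$ was the pointwise inequality $\sum_{\alpha}\mathfrak{g}_{\alpha\alpha}(x_0)\geq\sum_{\alpha}\underline{\mathfrak{g}}_{\alpha\alpha}(x_0)$ (equivalently $A(R)\geq\underline{A}(R)$) at the fixed boundary point $x_0$; every other ingredient there used only \eqref{elliptic}, \eqref{concave}, \eqref{nondegenerate}, the subsolution, Lemma~\ref{yuan'slemma2} and the technical hypothesis on $\eta$. Since Proposition~\ref{mix-prop1} already carries no restriction on the second fundamental form of $\partial M$, it suffices to re-establish the dual trace bound \eqref{yuan-prop-dual} at every $x_0\in\partial M$, now allowing the constant to depend also on $|\breve{u}|_{C^2(\bar M)}$; then \eqref{good-quard} follows by combining it with Proposition~\ref{mix-prop1} exactly as in Theorem~\ref{quan-boundaryestimate-thm1}.

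First I would record the comparison inequalities. In the equivalent elliptic form $\tilde{f}(\lambda(\mathfrak{g}[u]))=\psi$ (cf. the sketch of proof of Theorem~\ref{quan-boundaryestimate-thm1}), hypothesis \eqref{existenceofsubsolution} makes $\underline{u}$ a subsolution and \eqref{supersolution1} makes $\breve{u}$ a supersolution, both with boundary value $\varphi$, so the comparison principle gives $\underline{u}\leq u\leq\breve{u}$ in $M$; hence on $\partial M$ one has $\nabla_\nu\underline{u}\leq\nabla_\nu u\leq\nabla_\nu\breve{u}$ (in particular $\sup_{\partial M}|\nabla u|\leq C$ and the pure tangential bound \eqref{puretangential} persists), together with the Hessian identity $\nabla^2(u-\breve{u})(\mathrm{e}_\alpha,\mathrm{e}_\beta)=-\nabla_\nu(u-\breve{u})\,\mathrm{II}_{\partial M}(\mathrm{e}_\alpha,\mathrm{e}_\beta)$ coming from $u-\breve{u}\equiv0$ on $\partial M$. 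In particular $\nabla_\nu(u-\underline{u})\geq0$ and $\nabla_\nu(u-\breve{u})\leq0$ along $\partial M$.

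Now fix $x_0\in\partial M$, take the coordinates of the proof of Proposition~\ref{proposition-quar-yuan1} ($g_{ij}(x_0)=\delta_{ij}$, $(\mathfrak{g}_{\alpha\beta})(x_0)$ diagonal, $\partial/\partial x_n$ the inner unit normal) and split on the sign of $\mathrm{H}_{\partial M}(x_0)$. If $\mathrm{H}_{\partial M}(x_0)\leq0$: tracing the third line of \eqref{formular3} tangentially, using $\sum_{i\neq n}\eta^n_{ii}(x_0)=0$ and $\nabla_\nu(u-\underline{u})(x_0)\geq0$, still gives $\sum_{\alpha}\mathfrak{g}_{\alpha\alpha}(x_0)\geq\sum_{\alpha}\underline{\mathfrak{g}}_{\alpha\alpha}(x_0)$, i.e. $A(R)\geq\underline{A}(R)$, and the proof of Proposition~\ref{proposition-quar-yuan1} applies verbatim. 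If $\mathrm{H}_{\partial M}(x_0)>0$: play the same game with $\breve{u}$ replacing $\underline{u}$. Tracing the Hessian identity for $u-\breve{u}$ and using $\sum_{i\neq n}\eta^n_{ii}(x_0)=0$ gives
\[
\sum_{\alpha}\mathfrak{g}_{\alpha\alpha}(x_0)=\sum_{\alpha}(\mathfrak{g}[\breve{u}])_{\alpha\alpha}(x_0)-\mathrm{H}_{\partial M}(x_0)\,\nabla_\nu(u-\breve{u})(x_0)\geq\sum_{\alpha}(\mathfrak{g}[\breve{u}])_{\alpha\alpha}(x_0),
\]
since $\mathrm{H}_{\partial M}(x_0)>0$ and $\nabla_\nu(u-\breve{u})(x_0)\leq0$. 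Let $\breve{A}(R)$ be $A(R)$ with its $(n,n)$-entry $\sum_\alpha\mathfrak{g}_{\alpha\alpha}$ replaced by $\sum_\alpha(\mathfrak{g}[\breve{u}])_{\alpha\alpha}$; then $A(R)\geq\breve{A}(R)$. Because $\breve{u}$ is admissible, applying Lemma~\ref{yuan'slemma2} to $(U_{ij}[\breve{u}])$ as was done for $(U_{ij}[\underline{u}])$ in Proposition~\ref{proposition-quar-yuan1}, together with the openness of $\Gamma$ and $\Gamma+\overline{\Gamma}_n\subseteq\Gamma$, shows $(t,\dots,t,\sum_\alpha(\mathfrak{g}[\breve{u}])_{\alpha\alpha}(x_0))\in\Gamma$ for $t\gg1$; combining this with \eqref{unbound-strong} and $\sup_M\psi<\sup_\Gamma f$ (automatic from \eqref{existenceofsubsolution}) yields $F(\breve{A}(R))>\psi$ once $R$ passes the quadratic-growth threshold $R_c$ of Lemma~\ref{yuan'slemma2}, where moreover $R_c\leq C(1+\sum_{\alpha}|\mathfrak{g}_{\alpha n}(x_0)|^2)$ thanks to \eqref{puretangential}. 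Since $A(\mathrm{tr}_g\mathfrak{g})=(U_{ij}[u])$, if $\mathrm{tr}_g(\mathfrak{g})(x_0)\geq R_c$ then by monotonicity of $F$, $\psi=F(A(\mathrm{tr}_g\mathfrak{g}))\geq F(A(R_c))\geq F(\breve{A}(R_c))>\psi$, a contradiction; hence $\mathrm{tr}_g(\mathfrak{g})(x_0)<R_c$, which is \eqref{yuan-prop-dual}. Feeding this into Proposition~\ref{mix-prop1} produces \eqref{good-quard} with the asserted dependence.

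The main obstacle is the step $F(\breve{A}(R))>\psi$ in the case $\mathrm{H}_{\partial M}(x_0)>0$: unlike the subsolution, the supersolution is of no direct help through its defining inequality $F(U[\breve{u}])\leq\psi$, which points the wrong way, so one must instead exploit \eqref{unbound-strong} — pushing the $n-1$ tangential eigenvalues of $\breve{A}(R)$ to $+\infty$ while anchoring the remaining eigenvalue at $\sum_\alpha(\mathfrak{g}[\breve{u}])_{\alpha\alpha}(x_0)\in\Gamma^{\infty}_{\mathbb{R}^1}$ drives $f$ up to $\sup_\Gamma f>\sup_M\psi$. Apart from this, the only care needed is in the $\varepsilon_0$-perturbations keeping the relevant vectors inside the open cone $\Gamma$, which is handled exactly as in \eqref{opppp}--\eqref{lemma12-yuan}.
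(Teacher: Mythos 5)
Your proposal is correct and takes essentially the same route as the paper's own (very brief) sketch in Appendix~\ref{append-B}: split pointwise on the sign of $\mathrm{H}_{\partial M}(x_0)$, re-use the subsolution argument when $\mathrm{H}_{\partial M}(x_0)\leq 0$, and when $\mathrm{H}_{\partial M}(x_0)\geq 0$ replace $\underline{\mathfrak{g}}_{\alpha\alpha}$ by $\breve{\mathfrak{g}}_{\alpha\alpha}$ in the $(n,n)$ slot, using $\nabla_\nu(u-\breve{u})|_{\partial M}\leq 0$ and the admissibility of $\breve{u}$ together with \eqref{unbound-strong} to recover the analogue \eqref{opppp*} of \eqref{opppp}. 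You have supplied the details the paper omits — in particular you correctly identify that the supersolution's defining inequality $F(U[\breve{u}])\leq\psi$ points the wrong way and plays no direct role, and that it is $\breve{u}$'s admissibility plus \eqref{unbound-strong} (pushing the $n-1$ slots to $+\infty$ while anchoring $\sum_\alpha\breve{\mathfrak{g}}_{\alpha\alpha}$) that yields $F(\breve{A}(R_c))>\psi$ — which is exactly the content of the paper's two-line sketch made precise.
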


 \begin{proof}[Sketch of proof]
 Condition \eqref{unbound-strong} implies that there exist two uniformly positive constants $\varepsilon_{0}'$, $R_{0}'$
 depending  on  $\breve{\mathfrak{g}}$  and $f$, such that
\begin{equation}
\label{opppp*}
\begin{aligned}
\,& f(R_0'-\underline{\lambda}'_1-\varepsilon_{0}',\cdots, R_0'-\underline{\lambda}'_{n-1}-\varepsilon_{0}',\sum_{\alpha=1}^{n-1}\breve{\mathfrak{g}}_{\alpha\alpha}-\varepsilon_{0}')\geq \psi
\end{aligned}
\end{equation}
  In the case of $\mathrm{H}_{\partial M}\geq 0$, in place of \eqref{opppp} we  use \eqref{opppp*}, where we use  ${(\breve{u}-u)_{\nu}}|_{\partial M}\geq 0$.

 \end{proof}

 \end{appendix}





\bigskip

\small
\bibliographystyle{plain}

\end{CJK*}

\end{document}